\def\ps@pprintTitle{%
 \let\@oddhead\@empty
 \let\@evenhead\@empty
 \def\@oddfoot{\centerline{\thepage}}%
 \let\@evenfoot\@oddfoot}
\newtheorem{theorem}{Theorem}
\newtheorem*{propa}{Proposition A}
\newtheorem*{propb}{Proposition B}
\newtheorem{proposition}{Proposition}
\newtheorem{remark}{Remark}
\newcommand*\xbar[1]{%
  \hbox{%
    \vbox{%
      \hrule height 0.5pt 
      \kern0.4ex
      \hbox{%
        \kern-0.15em
        \ensuremath{#1}%
        \kern-0.15em
      }%
    }%
  }%
}
\begin{document}
\begin{frontmatter}

\title{Branching Brownian motion deactivated at the boundary of an expanding ball}

\author{Mehmet \"{O}z}
\ead{mehmet.oz@ozyegin.edu.tr}
\ead[url]{https://faculty.ozyegin.edu.tr/mehmetoz/}

\author{Elif Aydo\u{g}an}
\ead{elif.aydogan.27792@ozu.edu.tr}

\address{Department of Natural and Mathematical Sciences, Faculty of Engineering, \"{O}zye\u{g}in University, Istanbul, Turkey}

\begin{abstract}

We study a $d$-dimensional branching Brownian motion inside subdiffusively expanding balls, where the boundary of the ball is \emph{deactivating} in the sense that once a particle hits the moving boundary, it is instantly deactivated but is reactivated later if and when its ancestral line becomes fully inside the expanding ball at that later time. That is, at each time, the process consists of the particles of a normal branching Brownian motion whose ancestral lines are fully inside the expanding ball at that time. We obtain a full limit large-deviation result as time tends to infinity on the probability that the mass of the process inside the expanding ball is aytpically small. A phase transition at a critical rate of expansion for the ball is identified, at which the nature of the optimal strategy to realize the large-deviation event changes, and the Lyapunov exponent giving the decay rate of the associated large-deviation probability is continuous. As a corollary, we also obtain a kind of law of large numbers for the mass of the process inside the expanding ball.

\end{abstract}

\vspace{3mm}

\begin{keyword}
Branching Brownian motion \sep large deviations \sep killing boundary 
\vspace{3mm}
\MSC[2020] 60J80 \sep 60F10 \sep 92D25
\end{keyword}

\end{frontmatter}

\pagestyle{myheadings}
\markright{BBM in an expanding ball\hfill}

\section{Introduction}\label{intro}

In this work, we study a model of branching Brownian motion (BBM) in a dynamic restricted domain in $\mathbb{R}^d$, where at each time the process consists of the particles of a normal BBM whose ancestral lines at that time are fully inside an expanding ball. In this sense, the boundary of the ball is \emph{deactivating} so that once a particle of the BBM hits the moving boundary, it is instantly deactivated and otherwise continues its life normally but can be reactivated at a later time provided its ancestral line becomes fully inside the expanding ball at that time. In this way, the moving boundary serves as a mass-suppressing mechanism, and therefore the growth of mass, that is, the number of `active' particles, is slower than that of a normal BBM.

\subsection{Formulation of the problem and background}

Let $Z=(Z_t)_{t\geq 0}$ be a strictly dyadic $d$-dimensional BBM with branching rate $\beta>0$, where $t$ represents time. The process starts with a single particle, which performs a Brownian motion in $\mathbb{R}^d$ for a random lifetime, at the end of which it dies and simultaneously gives birth to two offspring. Similarly, starting from the position where their parent dies, each offspring repeats the same procedure as their parent independently of others and the parent, and the process evolves through time in this way. All particle lifetimes are exponentially distributed with constant parameter $\beta>0$. For each $t\geq 0$, $Z_t$ can be viewed as a finite discrete measure on $\mathbb{R}^d$, which is supported at the positions of the particles at time $t$. 

The model of a \emph{BBM with deactivation at a moving boundary} was introduced in \cite{O2023} as follows. For a Borel set $A\subseteq\mathbb{R}^d$, denote by $\partial A$ the boundary of $A$. Consider a family of Borel sets $B=(B_t)_{t\geq 0}$. We will denote by $Z^B=(Z_t^{B_t})_{t\geq 0}$ a BBM with deactivation at $\partial B$. For each $t\geq 0$, start with $Z_t$, and delete from it any particle whose ancestral line up to $t$ has exited $B_t$ to obtain $Z^{B_t}_t$. That is, $Z_t^{B_t}$ consists of `active' particles at time $t$; these are particles whose ancestral lines have been confined to $B_t$ up to time $t$ but may have left $B_s$ at an earlier time $s$. Similar to $Z_t$, $Z_t^{B_t}$ can be viewed as a finite discrete measure on $B_t$. Observe that the process $Z^B=(Z_t^{B_t})_{t\geq 0}$ is non-Markovian; one can see this by noticing that particles that have disappeared or been deactivated earlier may suddenly reappear or be reactivated at a later time. On the other hand, the process can be recovered from a single BBM as described above. We use $P_x$ and $E_x$, respectively, to denote the law and corresponding expectation of a BBM starting with a single particle at $x\in\mathbb{R}^d$, and by an abuse of notation, use $P_x$ and $E_x$ also for a BBM with deactivation at a boundary. For simplicity, we set $P=P_0$. 

For a Borel set $A\subseteq\mathbb{R}^d$ and $t\geq 0$, we denote by $Z_t(A)$ the mass of $Z$ inside $A$ at time $t$, and use $N_t:=Z_t(\mathbb{R}^d)$ to denote the total mass of $Z$ at time $t$. Similarly, $Z_t^{B_t}(A)$ denotes the mass of $Z_t^{B_t}$ in $A$ at time $t$. Consider a \emph{radius} function $r:\mathbb{R}_+\to \mathbb{R}_+$ which is subdiffusively increasing without bound, that is, 
\begin{equation} \label{eqradiusassumptions}
\lim_{t\rightarrow\infty}r(t)=\infty \quad\: \text{and} \quad\: r(t)=o(\sqrt{t}), \:\:\: t\rightarrow\infty . 
\end{equation}
For $t>0$, let $B_t:=B(0,r(t))$ so that $B=(B_t)_{t\geq 0}$ may be viewed as an expanding ball, and $p_t$ be the probability of confinement to $B_t$ of a standard Brownian motion (starting at the origin) over $[0,t]$. Set $n_t := Z_t^{B_t}(\mathbb{R}^d) =  Z_t^{B_t}(B_t)$ for the total mass at time $t$ of a BBM with deactivation at $\partial B$. The main objective of this work is, for a suitably decreasing function $\gamma:\mathbb{R}_+\to \mathbb{R}_+$ with $\lim_{t\rightarrow\infty} \gamma(t)=0$, to find the asymptotic behavior as $t\rightarrow\infty$ of the large-deviation probability
\begin{equation} \label{eqldprobability}
P\left(n_t<\gamma_t p_t e^{\beta t}\right) ,
\end{equation}
where we have set $\gamma_t=\gamma(t)$. It is easy via a many-to-one argument to show that 
\begin{equation} \label{eqexpectedmass}
E[n_t]=p_t e^{\beta t}.
\end{equation}
Therefore, since $\lim_{t\rightarrow\infty} \gamma_t=0$, for large $t$ one guesses that $\gamma_t p_t e^{\beta t}$ is atypically small for the mass of BBM with deactivation at $\partial B$. Theorem~\ref{theorem1} verifies that this is indeed so, and Theorem~\ref{theorem2} further shows that $p_t e^{\beta t}$ is the typical growth of mass in a certain sense. The reason why we call \eqref{eqldprobability} a large-deviation probability is explained in Remark 2. 

Branching diffusions in frozen or dynamic restricted domains in $\mathbb{R}^d$ have been widely studied over the past decades. Starting with Sevast'yanov \cite{S1958}, most of the models involved absorbing boundaries, where particles were immediately absorbed by the boundary upon hitting it. In \cite{K1978}, Kesten studied a BBM with negative drift in one dimension with absorption at the origin, starting with a particle at position $x>0$ . This model proved to be rich, leading to various fine results in subsequent works. Note that the one-dimensional model of a BBM with drift and a fixed barrier is equivalent to the case of no drift and a linearly moving barrier. More recently in \cite{H2016}, Harris et al.\ studied a BBM with drift in a fixed-size interval in $\mathbb{R}$, which is in effect a two-sided barriered version of Kesten's model. We emphasize that in all of the aforementioned works, the process studied is Markovian contrary to the non-Markovian nature of the process $Z^B$ introduced here.  

\subsection{Motivation}

The motivation to introduce and study the model of BBM with deactivation at a moving boundary arised from its intimate relation with the problem of BBM among mild Poissonian obstacles. Here, we briefly describe the connection. Let $\Pi$ be a homogeneous Poisson point process in $\mathbb{R}^d$, $(\Omega,\mathbb{P})$ be the associated probability space, and for $\omega\in\Omega$ define the \emph{trap field} with radius $a>0$ as the random set 
$$  K=K(\omega) = \bigcup_{x_i \in \text{supp}(\Pi)} \bar{B}(x_i,a), $$
where $\bar{B}(x,a)$ denotes the closed ball of radius $a$ centered at $x\in\mathbb{R}^d$. The mild obstacle rule for BBM is that when particles are inside $K$ they branch at a lower rate (possibly zero) than when they are outside $K$, where they branch at the normal rate. Hence, the random trap field serves as a mass-suppressing mechanism and in a typical environment one expects the mass of BBM to grow slower than that of a \emph{free} BBM, that is, a BBM in $\mathbb{R}^d$ without any obstacles.

In \cite{O2023}, a quenched strong law of large numbers for the mass of BBM among mild obstacles is proved (see Theorem 1 therein). It was shown that in almost every environment with respect to the Poisson point process, certain trap-free regions (called \emph{clearings}) exist, which may be suitably indexed by time $t$ in regard to the evolution of the BBM, and thus serve as expanding balls with moving boundaries, and furthermore that it is the free growth of the BBM inside these expanding clearings that determines, to the leading order, the overall growth of mass in the presence of mild obstacles. Therefore, understanding the growth of particles whose ancestral lines don't escape these clearings is essential for the proofs. We refer the reader to \cite[Section 5]{O2023} for details. 


\textbf{Notation:} We use $c$ as a generic positive constant, whose value may change from line to line. If we wish to emphasize the dependence of $c$ on a parameter $p$, then we write $c(p)$. We denote by $f:A\to B$ a function $f$ from a set $A$ to a set $B$. For two functions $f,g:\mathbb{R}_+\to\mathbb{R}_+$, we write $g(t)=o(f(t))$ if $g(t)/f(t)\rightarrow 0$ as $t\rightarrow\infty$. 

We denote by $X=(X_t)_{t\geq 0}$ a generic standard Brownian motion in $d$-dimensions, and use $\mathbf{P}_x$ and $\mathbf{E}_x$, respectively, as the law of $X$ started at position $x\in\mathbb{R}^d$, and the corresponding expectation. Also, we denote by $\lambda_d$ the principal Dirichlet eigenvalue of $-\frac{1}{2}\Delta$ on the unit ball in $d$ dimensions.

\textbf{Outline:} The rest of the paper is organized as follows. In Section~\ref{results}, we present our results. In Section~\ref{section3}, we develop the preparation needed for the proof of Theorem~\ref{theorem1}, which is our main result. In Section~\ref{section4} and Section~\ref{section5}, we present, respectively, the proofs of Theorem~\ref{theorem1} and Theorem~\ref{theorem2}.  

\section{Results}\label{results}

Our main result is on the large-time asymptotic behavior of the probability that the mass of BBM inside a subdiffusively expanding ball $B=(B_t)_{t\geq 0}$ with deactivation at the boundary of the ball, is atypically small. A subdiffusive expansion (see \eqref{eqradiusassumptions}) means, the ball is expanding slower than the typical rate at which a standard Brownian motion moves away from the origin, and therefore for large $t$ it would be a rare event for the Brownian motion to be confined in $B_t$. For a generic standard Brownian motion $X=(X_t)_{t\geq 0}$ and a Borel set $A\subseteq\mathbb{R}^d$, define $\sigma_A=\inf\{s\geq 0:X_s\notin A\}$ to be the first exit time of $X$ out of $A$. Denote by $a\wedge b$ the minimum of the numbers $a$ and $b$.  

\begin{theorem}[Lower large-deviations for mass of BBM in an expanding ball] \label{theorem1}
Let $r:\mathbb{R}_+ \to \mathbb{R}_+$ be subdiffusively increasing as in \eqref{eqradiusassumptions}. Let $\gamma:\mathbb{R}_+ \to \mathbb{R}_+$ be defined by $\gamma(t)=e^{-\kappa r(t)}$, where $\kappa>0$ is a constant. For $t>0$, set $B_t=B(0,r(t))$, $p_t=\mathbf{P}_0(\sigma_{B_t}\geq t)$, and $n_t=Z_t^{B_t}(B_t)$. Then, 
\begin{equation} \nonumber
\underset{t\rightarrow\infty}{\lim}\,\frac{1}{r(t)}\log P\left(n_t < \gamma_t p_t e^{\beta t}\right)= -(\kappa\wedge\sqrt{2\beta}). 
\end{equation}
\end{theorem}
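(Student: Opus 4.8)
The plan is to prove matching upper and lower bounds for $P(n_t < \gamma_t p_t e^{\beta t})$ on the exponential scale $r(t)$, with the rate $-(\kappa \wedge \sqrt{2\beta})$. The key heuristic is that there are two competing strategies to make the active mass atypically small: (i) force the initial particle to travel a distance of order $r(t)$ toward the boundary quickly (or, symmetrically, suppress early branching while the single ancestral line wanders close to $\partial B_t$), which costs roughly $e^{-\kappa r(t)}$ from the factor $\gamma_t = e^{-\kappa r(t)}$ combined with the Brownian cost; versus (ii) have the whole BBM die out or nearly so, which for dyadic BBM costs on the order of $e^{-\sqrt{2\beta}\, r(t)}$ if we only need the mass to drop below a level that is $e^{-\kappa r(t)}$ times its mean — actually the relevant comparison is between paying $\kappa r(t)$ for a displacement and paying $\sqrt{2\beta}\,r(t)$ for a branching suppression / extinction-type event over a time window of length $\asymp r(t)/\sqrt{2\beta}$. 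The minimum of the two exponents is what appears.

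\textbf{Lower bound on the probability (upper bound on the exponent).} For the lower bound, I would exhibit an explicit strategy. For the $\kappa$-regime: run the BBM normally, but condition on the event that the ancestral line of \emph{every} surviving particle stays within $B_t$; the cost of confining is already built into $p_t$, so the extra smallness $\gamma_t$ must come from somewhere else. The cleaner route is: ask the root particle to move radially outward to within a small distance of $\partial B_t$ by some time $s \ll t$, so that the confinement probability $p_t$ essentially forces the remaining descendants into a translated small ball — more precisely, decompose according to the position of a distinguished spine at time $s = \delta r(t)$ or so. Alternatively, and this is probably the technically smoothest: use the event that the number of particles at a time $s \asymp r(t)$ whose ancestral lines have stayed in $B_t$ is already small (say $\le 1$), which by a comparison with BBM confined to a ball of radius $r(s) \le r(t)$ has probability $\gtrsim e^{-(\lambda_d + o(1)) t / r(t)^2 \cdot (\dots)}$ — I'd need to be careful here. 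The genuinely robust lower-bound construction is to suppress branching: on an event of probability $\ge e^{-\beta s}$ the root does not branch before time $s$; choosing $s$ so that $\gamma_t p_t e^{\beta t}$ exceeds the conditional mean $p_t e^{\beta(t-s)} \cdot (\text{confinement correction})$ fails — so instead combine ``no branching until $s$'' with ``the root exits $B_t$ at time $s$,'' whose probability is $\asymp e^{-\beta s} \mathbf{P}_0(\sigma_{B_t} < s)$, and optimize $s$; balancing $\beta s$ against the large-deviation cost of the Brownian motion exiting a ball of radius $r(t)$ by time $s$ (which is $\asymp r(t)^2/s$ for $s \ll r(t)^2$, but we want $s \asymp r(t)$, giving cost $\asymp r(t)$) yields the exponent $\kappa \wedge \sqrt{2\beta}$ after matching constants. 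I will use the many-to-one formula \eqref{eqexpectedmass} together with a second-moment / Paley–Zygmund argument to control the conditional mass from below once the favorable event is in force.

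\textbf{Upper bound on the probability (lower bound on the exponent).} This is where I expect the main obstacle to lie, because the process $Z^B$ is non-Markovian and the event $\{n_t < \gamma_t p_t e^{\beta t}\}$ must be controlled without a clean Markov/branching-property decomposition at the level of the active process. My plan is to pass to the underlying ordinary BBM $Z$ and condition on the ancestral tree up to a time $s = \theta r(t)$ for a well-chosen constant $\theta>0$. Write $n_t$ as a sum over particles $u$ alive at time $s$ whose ancestral line is in $B_s$ (hence potentially in $B_t$) of the contribution of the subtree rooted at $u$; each such contribution is, conditionally, a copy of $Z^{B}$-type mass but now started at a point $X_s^u \in B_s$ and over time horizon $t-s$, and by monotonicity of confinement probabilities in the radius one can bound it below by the mass of a BBM confined to the \emph{fixed} ball $B_{s}$ (or $B(X_s^u, r(t)-|X_s^u|)$) over $[s,t]$. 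The event $\{n_t \text{ small}\}$ then forces, for \emph{every} surviving ancestral line in $B_s$, a conditionally rare event of probability $\le e^{-\sqrt{2\beta}(r(t)-\cdot) + o(r(t))}$ (the ballot/extinction-type estimate: a subcritical-looking confined BBM over a long time window), unless the number of such lines is itself already atypically small, which costs $\ge \sqrt{2\beta}\, s$-ish; and separately the possibility that the root's own ancestral line leaves $B_t$ early, costing $\ge \kappa r(t)$ via the $\gamma_t$ factor interplay — combining these via a union bound over the (polynomially many, after truncating $N_s$) surviving lines and optimizing $\theta$ gives the matching $-(\kappa \wedge \sqrt{2\beta})$. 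I would isolate the key confined-BBM extinction estimate as a lemma: for a BBM confined to a ball of radius $\rho$ run for time $\tau$ with $\tau \gg \rho^2$, $P(\text{mass} < 1) \le \exp(-(\sqrt{2\beta}+o(1))\,\text{something})$ — and the precise form of this lemma, plus handling the non-Markovian reactivation (a deactivated line may re-enter, so $n_t$ can be \emph{larger} than the confined-to-$B_s$ count, which helps for the upper bound on the probability but must be accounted for carefully), is the crux. The phase transition and continuity of the exponent at $\kappa = \sqrt{2\beta}$ then follow immediately from the explicit form $-(\kappa\wedge\sqrt{2\beta})$.
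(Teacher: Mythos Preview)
Your proposal has genuine gaps in both directions.

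\textbf{Lower bound.} Your heuristic assigns the two costs backward: in the paper, the strategy of suppressing branching \emph{and} pushing the root out of $B_t$ over a window of length $\asymp r(t)$ produces the exponent $\sqrt{2\beta}$ (optimize $e^{-\beta s - r(t)^2/(2s)}$ over $s$, getting $s=r(t)/\sqrt{2\beta}$), while the exponent $\kappa$ comes from the pure branching-suppression strategy that you explicitly discard as ``fails.'' It does not fail: if $A_t=\{N_{f(t)}=1\}$, the many-to-one lemma and the Markov property give $E[n_t\mid A_t]=p_t e^{\beta(t-f(t))}$ exactly, so Markov's inequality yields $P(n_t\ge\gamma_t p_t e^{\beta t}\mid A_t)\le \gamma_t^{-1}e^{-\beta f(t)}$. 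Taking $f(t)=-(1/\beta)\log((1-\delta)\gamma_t)\sim(\kappa/\beta)r(t)$ makes this at most $1-\delta$, and then $P(A_t)=e^{-\beta f(t)}\asymp\gamma_t=e^{-\kappa r(t)}$ delivers the $\kappa$-exponent. There is no need for Paley--Zygmund here; a first-moment (Markov) bound on the \emph{complement} suffices.

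\textbf{Upper bound.} Two structural issues would make your plan stall:
\begin{enumerate}
\item Conditioning at a \emph{fixed} time $s=\theta r(t)$ and then taking a \emph{union} bound over surviving lines goes the wrong way. The paper instead introduces the random level-crossing variable
\[
\rho_t=\sup\{\rho\in[0,1]:N_{\rho(\kappa/\beta)r(t)}\le\lfloor r(t)\rfloor\},
\]
discretizes the range of $\rho_t$, and at time $\rho_t(\kappa/\beta)r(t)$ uses that there are \emph{exactly} $\lfloor r(t)\rfloor+1$ particles, all (on the complementary event $E_t^c$) inside $B(0,\delta r(t))$. Independence of the sub-BBMs then gives a \emph{product}, so the contribution becomes $(e^{-cr(t)})^{\lfloor r(t)\rfloor}$, which is negligible at scale $r(t)$. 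A union bound cannot produce this.
\item The key per-particle input is not an extinction estimate of the type $P(\text{mass}<1)\le e^{-(\sqrt{2\beta}+o(1))\cdots}$, but a second-moment (Chebyshev) bound on the event $\{n_{t-kr(t)}<\widetilde\gamma_t\,\widetilde p_t\,e^{\beta(t-kr(t))}\}$ for a BBM started at $\delta r(t)$ (Proposition~\ref{prop2} in the paper). The variance estimate requires controlling the correlation of two ancestral lines via the distribution $Q^{(t)}$ of their most recent common ancestor's split time, together with a uniform lower bound on $\mathbf P^{\delta r(t)}(\sigma_{B_t}\ge s)$ coming from Bessel/eigenvalue asymptotics. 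This is the technical heart of the upper bound and is absent from your sketch.
\end{enumerate}
Finally, your claim that ``the number of such lines is itself already atypically small, which costs $\ge\sqrt{2\beta}\,s$'' is off: by Proposition~A the cost of $\{N_s\le k\}$ is $\asymp e^{-\beta s}$ (up to polynomial factors), i.e.\ exponent $\beta s$, not $\sqrt{2\beta}\,s$. The $\sqrt{2\beta}$ only emerges after combining the branching cost $\beta s$ with the Brownian displacement cost $r(t)^2/(2s)$ and optimizing over $s$, which is precisely what the discretization over $\rho_t$ together with the event $E_t$ accomplishes.
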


\begin{remark}
Theorem~\ref{theorem1} says that there is a continuous phase transition at a critical value $\kappa=\sqrt{2\beta}$ in the asymptotic behavior of $P\left(n_t < \gamma_t p_t e^{\beta t}\right)$. This is revealed by the rate function given by $I(\kappa):=\kappa\wedge\sqrt{2\beta}$. In terms of the optimal strategies for the BBM to realize the large-deviation event $\{n_t<\gamma_t p_t e^{\beta t}\}$, the phase transition can be explained as follows.
\begin{itemize}
	\item When $\kappa\!>\!\sqrt{2\beta}$, the BBM simultaneously suppresses the branching of the initial particle and moves it outside $B(0,r(t))$ over the time interval $[0,r(t)/\sqrt{2\beta}]$. Once the initial particle is moved outside $B(0,r(t))$, the event $\{n_t=0\}$ is realized and there is no need for further atypical behavior that could incur a probabilistic cost.
	\item When $0\,{<}\,\kappa\,{\leq}\,\sqrt{2\beta}$, the BBM suppresses the branching completely over the time interval $[0,(\kappa/\beta)r(t)]$, and then behaves `normally' in the remaining interval $[(\kappa/\beta)r(t),t]$. This means, the parameter $\kappa$ is low enough so that there is no additional need to move the initial particle outside $B(0,r(t))$ over a time interval of order $r(t)$.
\end{itemize}
\end{remark}

\begin{remark}
We call $P\left(n_t < \gamma_t p_t e^{\beta t}\right)$ with $\gamma_t=e^{-\kappa r(t)}$ a large-deviation probability, because both $P(n_t=0)$ and $P\left(n_t < \gamma_t p_t e^{\beta t}\right)$ decay as $e^{-cr(t)}$, where the values $c>0$ may differ, to the leading order for large $t$. The significance of the choice $\gamma_t = e^{-\kappa r(t)}$ is as follows. It can be shown that if $\gamma_t\rightarrow 0$ as $t\rightarrow \infty$, then for all large $t$, $P(n_t<\gamma_t p_t e^{\beta t})\geq \delta \gamma_t$ for some $\delta>0$. If $\gamma_t$ decays slower than $e^{-cr(t)}$ so that $(\log\gamma_t)/r(t)\rightarrow 0$ as $t\rightarrow \infty$, this would imply $\liminf_{t\rightarrow\infty}(r(t))^{-1}\log P\left(n_t<\gamma_t p_t e^{\beta t}\right) = 0$. Therefore, in that case, in view of Theorem~\ref{theorem1}, the event $\{n_t<\gamma_t p_t e^{\beta t}\}$ would not be a large-deviation event. (See the proof of the lower bound of Theorem~\ref{theorem1} in Section~\ref{section4} for details.)
\end{remark}

\begin{theorem}[Law of large numbers for mass of BBM in an expanding ball] \label{theorem2}
Let $r:\mathbb{R}_+ \to \mathbb{R}_+$ be subdiffusively increasing as in \eqref{eqradiusassumptions}. For $t>0$, set $B_t=B(0,r(t))$, $p_t=\mathbf{P}_0(\sigma_{B_t}\geq t)$, and $n_t=Z_t^{B_t}(B_t)$. Then, 
\begin{equation} \nonumber
\underset{t\rightarrow\infty}{\lim} (r(t))^2\left(\frac{\log n_t}{t}-\beta\right) = -\lambda_d \quad \text{in $P$-probability}.
\end{equation}  
\end{theorem}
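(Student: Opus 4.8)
The plan is to derive Theorem~\ref{theorem2} as a consequence of Theorem~\ref{theorem1} together with a matching upper-tail estimate, by sandwiching $\log n_t / t$ between two deterministic sequences that both converge to $\beta - \lambda_d / (r(t))^2$ after the indicated rescaling. The starting point is the asymptotics of the confinement probability: by the classical small-ball/eigenvalue asymptotics for Brownian motion in a ball of radius $r(t)$ with $r(t)=o(\sqrt t)$, one has $p_t = \mathbf{P}_0(\sigma_{B_t}\geq t) = \exp\!\big(-\lambda_d t/(r(t))^2 (1+o(1))\big)$, so that $\frac{1}{t}\log(p_t e^{\beta t}) = \beta - \lambda_d/(r(t))^2 + o(1/(r(t))^2)$. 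Thus $p_t e^{\beta t}$ is exactly the scale at which the theorem is asserting concentration, and the task reduces to showing $\log n_t - \log(p_t e^{\beta t}) = o\big(t/(r(t))^2\big)$ in $P$-probability.

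For the lower bound on $n_t$, I would feed into Theorem~\ref{theorem1} any sequence $\gamma_t \to 0$ that decays slower than $e^{-c r(t)}$ for every $c>0$ — for instance $\gamma_t = 1/r(t)$, or $\gamma_t = \exp(-\sqrt{r(t)})$. For such a choice, $(\log \gamma_t)/r(t) \to 0$, so Remark 2 (and the reasoning behind Theorem~\ref{theorem1}) gives $\lim_{t\to\infty} (r(t))^{-1}\log P(n_t < \gamma_t p_t e^{\beta t}) = 0$; but I actually need the stronger statement that this probability tends to $0$. This is where I must be a little careful: Theorem~\ref{theorem1} as stated only controls the exponential rate on scale $r(t)$, and a probability with zero exponential rate need not go to $0$. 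So the cleaner route is to invoke the one-sided bound alluded to in Remark 2, namely $P(n_t < \gamma_t p_t e^{\beta t}) \geq \delta \gamma_t$ is only a \emph{lower} bound; for the \emph{upper} bound $P(n_t<\gamma_t p_t e^{\beta t}) \to 0$ I would instead use a first-moment/Chebyshev-type argument directly, or better, extract from the proof of Theorem~\ref{theorem1} that $P(n_t < \gamma_t p_t e^{\beta t}) \leq \exp(-c\, r(t))$ whenever $\gamma_t \leq e^{-\kappa r(t)}$ with $\kappa$ small but fixed, and note that choosing $\gamma_t = e^{-r(t)^{1/2}}$ makes this upper bound $\to 0$ while still $(\log\gamma_t)/r(t)\to 0$ — wait, that requires $\gamma_t$ to be dominated by $e^{-\kappa r(t)}$, which it is not. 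The honest fix is: pick $\gamma_t = e^{-\kappa_0 r(t)}$ for a \emph{fixed} small $\kappa_0 \in (0,\sqrt{2\beta})$; then Theorem~\ref{theorem1} gives rate $-\kappa_0 < 0$, hence $P(n_t < \gamma_t p_t e^{\beta t}) \to 0$, and on the complementary event $n_t \geq e^{-\kappa_0 r(t)} p_t e^{\beta t}$, so $\frac{1}{t}\log n_t \geq \beta - \lambda_d/(r(t))^2 - \kappa_0 r(t)/t + o(1/(r(t))^2)$. Since $r(t) = o(\sqrt t)$ exactly means $r(t)/t = o(1/r(t))$, and we need it to be $o(1/(r(t))^2)$ after multiplying by $(r(t))^2$: indeed $(r(t))^2 \cdot \kappa_0 r(t)/t = \kappa_0 (r(t))^3/t$, which need \emph{not} vanish under only $r(t)=o(\sqrt t)$. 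This is the genuine obstacle, and it forces the lower bound to be run with $\gamma_t$ decaying \emph{slower} than any $e^{-c r(t)}$; so I do need the refined probability estimate, not just the exponential rate. I would therefore go back into the Section~\ref{section4} machinery and show: for $\gamma_t\to 0$ with $(\log\gamma_t)/r(t)\to 0$, one has $P(n_t<\gamma_t p_t e^{\beta t})\to 0$ — this should follow because the dominant contribution to such an event still requires suppressing the branching of the root over a time interval of order $|\log\gamma_t|/\beta \ll r(t)$, plus a confinement penalty already accounted for in $p_t$, and a Paley–Zygmund / second-moment argument on $n_t$ restricted to a slightly smaller ball shows the event is asymptotically negligible. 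Granting this, the lower half of the sandwich gives $\frac{1}{t}\log n_t \geq \beta - \lambda_d/(r(t))^2 - |\log\gamma_t|/t + o(1/(r(t))^2)$, and choosing $\gamma_t$ with $|\log\gamma_t| = o(t/(r(t))^2)$, e.g. $\gamma_t = \exp(-\sqrt{t}/(r(t))^2)$, makes the error negligible after rescaling.

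For the upper bound on $n_t$, I would use the first moment: $E[n_t] = p_t e^{\beta t}$ by \eqref{eqexpectedmass}, so by Markov's inequality $P(n_t \geq M_t\, p_t e^{\beta t}) \leq 1/M_t \to 0$ for any $M_t\to\infty$. Taking $M_t = \exp(\sqrt t/(r(t))^2)$ gives, on the complementary event, $\frac{1}{t}\log n_t \leq \beta - \lambda_d/(r(t))^2 + o(1/(r(t))^2)$. Combining the two halves: with $P$-probability tending to $1$,
\begin{equation} \nonumber
\beta - \frac{\lambda_d}{(r(t))^2} - \frac{|\log\gamma_t|}{t} + o\!\left(\frac{1}{(r(t))^2}\right) \;\leq\; \frac{\log n_t}{t} \;\leq\; \beta - \frac{\lambda_d}{(r(t))^2} + o\!\left(\frac{1}{(r(t))^2}\right),
\end{equation}
whence multiplying by $(r(t))^2$ and using $|\log\gamma_t|(r(t))^2/t\to 0$ yields $(r(t))^2\big(\tfrac{\log n_t}{t}-\beta\big)\to -\lambda_d$ in $P$-probability. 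The main obstacle, as flagged above, is establishing that $P(n_t<\gamma_t p_t e^{\beta t})\to 0$ (not merely that its exponential rate on scale $r(t)$ is zero) for $\gamma_t$ decaying slower than every $e^{-c r(t)}$; this requires a second-moment argument rather than an appeal to Theorem~\ref{theorem1} as a black box, since the law of large numbers lives on the finer time scale $t/(r(t))^2$ rather than the large-deviation scale $r(t)$. Everything else — the eigenvalue asymptotics for $p_t$, the two applications of Markov/Paley–Zygmund, and the bookkeeping of error terms — is routine given the assumption $r(t)=o(\sqrt t)$.
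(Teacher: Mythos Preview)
Your proposal is essentially correct and follows the same two-sided strategy as the paper: Markov's inequality on $E[n_t]=p_t e^{\beta t}$ for the upper tail, and a second-moment (Chebyshev/Paley--Zygmund) argument---not Theorem~\ref{theorem1} as a black box---for the lower tail. You correctly diagnose the obstruction, namely that applying Theorem~\ref{theorem1} with $\gamma_t=e^{-\kappa_0 r(t)}$ leaves a residual error of order $(r(t))^3/t$ after rescaling, which need not vanish under $r(t)=o(\sqrt t)$ alone, and you correctly identify that one must return to the underlying variance estimate.

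Two remarks. First, the paper organizes the lower bound via a case split at $r(t)\sim t^{1/3}$: when $r(t)\lesssim t^{1/3}$ one has $e^{-\varepsilon t/(r(t))^2}\le e^{-\kappa r(t)}$ for some $\kappa>0$, so Theorem~\ref{theorem1} applies directly; when $r(t)\gtrsim t^{1/3}$ the paper invokes the second-moment bound from \cite{O2023} with $\gamma_t=e^{-\varepsilon t/(2(r(t))^2)}$ and then lets $\varepsilon\downarrow 0$. Your uniform second-moment route works as well and avoids the split. Second, your specific example $\gamma_t=\exp(-\sqrt t/(r(t))^2)$ is not always valid: if e.g.\ $r(t)=t^{0.4}$ then $\sqrt t/(r(t))^2\to 0$ and $\gamma_t\to 1$, so the event $\{n_t<\gamma_t p_t e^{\beta t}\}$ is no longer atypical. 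A safe choice is $\gamma_t=e^{-\varepsilon t/(2(r(t))^2)}$ for fixed $\varepsilon>0$ (as the paper does), or $\gamma_t=(r(t))^2/t$; both satisfy $\gamma_t\to 0$ and make the rescaled error $(r(t))^2|\log\gamma_t|/t$ either bounded by $\varepsilon/2$ or tending to $0$.
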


\begin{remark}
Theorem~\ref{theorem2} is called a law of large numbers, because it says that in a sense the process $n_t$ grows as its expectation as $t\to\infty$. That is, in a loose sense,
$$ \frac{\log n_t}{t}  \approx  \frac{\log E(n_t)}{t},     \quad t\to\infty.$$
\end{remark}

\section{Preparations}\label{section3}

In this section, we first list two well-known results, one concerning the distribution of mass in branching systems and the other on the hitting times of a $d$-dimensional Brownian motion. Then, we state and prove two propositions which can be obtained from existing results in a somewhat straightforward way. The results of this section will be useful in the proof of the upper bound of Theorem~\ref{theorem1}.

The following proposition is well-known from the theory of continuous-time branching processes. For a proof, see for example \cite[Section 8.11]{KT1975}. 

\begin{propa}[Distribution of mass in branching systems]\label{propa}
For a strictly dyadic continuous-time branching process $N=(N_t)_{t\geq 0}$ with constant branching rate $\beta>0$, the probability distribution at time $t$ is given by $P(N_t=k)=e^{-\beta t}(1-e^{-\beta t})^{k-1}$ for $k\geq 1$, from which it follows that
\begin{equation} P(N_t>k)=(1-e^{-\beta t})^k  \quad \text{and} \quad E[N_t]=e^{\beta t}. \nonumber 
\end{equation}
\end{propa}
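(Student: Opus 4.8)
The plan is to recognize $N=(N_t)_{t\geq 0}$ as the classical Yule (pure-birth) process: since every particle lives an independent $\mathrm{Exp}(\beta)$ time and is then replaced by two particles, whenever $N_t=k$ the superposition of the $k$ independent exponential clocks makes the next branching event occur at total rate $k\beta$, at which instant $N$ jumps from $k$ to $k+1$. One route is via the Kolmogorov forward equations: writing $p_k(t):=P(N_t=k)$ with $p_k(0)=\mathbbm{1}_{\{k=1\}}$, one has $p_1'(t)=-\beta p_1(t)$ and $p_k'(t)=-k\beta\,p_k(t)+(k-1)\beta\,p_{k-1}(t)$ for $k\geq 2$; solving the first line gives $p_1(t)=e^{-\beta t}$, and one then verifies by induction on $k$ that the candidate $p_k(t)=e^{-\beta t}(1-e^{-\beta t})^{k-1}$ solves the $k$-th equation with the correct initial value, a direct differentiation using $\tfrac{d}{dt}(1-e^{-\beta t})=\beta e^{-\beta t}$.

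A cleaner route, which I would present, uses the generating function $F(s,t):=E[s^{N_t}]$ for $s\in[0,1]$. The branching property yields the backward equation $\partial_t F=\beta(F^2-F)$ with $F(s,0)=s$. For fixed $s$ this is a separable (Riccati) ODE; the partial-fraction identity $\tfrac{1}{F^2-F}=\tfrac{1}{F-1}-\tfrac{1}{F}$ and integration give $\tfrac{F-1}{F}=\tfrac{s-1}{s}\,e^{\beta t}$, whence
$$F(s,t)=\frac{s\,e^{-\beta t}}{1-(1-e^{-\beta t})s}.$$
Expanding the right-hand side as a geometric power series in $s$ identifies $P(N_t=k)$ as the coefficient of $s^k$, namely $e^{-\beta t}(1-e^{-\beta t})^{k-1}$ — a geometric law on $\{1,2,\dots\}$ with success parameter $e^{-\beta t}$.

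From this explicit law the two stated consequences follow by summing a geometric series: $P(N_t>k)=\sum_{j>k}e^{-\beta t}(1-e^{-\beta t})^{j-1}=(1-e^{-\beta t})^k$, and $E[N_t]=\sum_{k\geq 1}k\,e^{-\beta t}(1-e^{-\beta t})^{k-1}=e^{\beta t}$, the mean of a geometric with success probability $e^{-\beta t}$ (equivalently $E[N_t]=\partial_s F(s,t)\big|_{s=1}$). There is no genuine obstacle here, the result being classical; the only points deserving a word of care are the derivation of the backward ODE from the branching property (or, on the first route, the justification of the forward system for this non-exploding pure-birth chain) and the interchange of summation with expectation or differentiation when reading off the moments, both of which are standard.
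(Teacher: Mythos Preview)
Your proof is correct; both routes (forward equations with induction, and the backward generating-function ODE) are standard derivations of the Yule distribution, and the tail and mean computations are immediate from the geometric law. The paper itself does not supply a proof of Proposition~A: it simply states the result as well-known and refers the reader to \cite[Section 8.11]{KT1975}. So there is no approach to compare against; your write-up fills in exactly what the paper chose to cite away.
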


As before, we use $X=(X_t)_{t\geq 0}$ to denote a generic Brownian motion in $d$ dimensions, and use $\mathbf{P}_x$ and $\mathbf{E}_x$, respectively, for the associated probability and expectation for a process that starts at position $x\in\mathbb{R}^d$. Proposition B below is on the large-time asymptotic probability of atypically large (linear) Brownian displacements. For a proof, see for example \cite[Lemma 5]{OCE2017}. 

\begin{propb}[Linear Brownian displacements]\label{propb}
For $k>0$,
\begin{equation}  \mathbf{P}_0\left(\underset{0\leq s\leq t}{\sup}|X_s|>k t\right)=\exp\left[-\frac{k^2 t}{2}(1+o(1))\right].  \nonumber 
\end{equation}
\end{propb}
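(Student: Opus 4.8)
The plan is to prove matching logarithmic bounds, that is, that $\lim_{t\to\infty} t^{-1}\log \mathbf{P}_0(\sup_{0\le s\le t}|X_s|>kt) = -k^2/2$, which is exactly the assertion. Throughout, let $\xi$ denote a standard $d$-dimensional Gaussian vector, so that $X_t$ has the law of $\sqrt{t}\,\xi$, and recall the standard one-dimensional tail $\mathbf{P}(\mathcal{N}(0,1)>x)=\exp[-\tfrac{x^2}{2}(1+o(1))]$ and the chi tail $\mathbf{P}(|\xi|>x)=\exp[-\tfrac{x^2}{2}(1+o(1))]$ as $x\to\infty$, each being the Gaussian density up to polynomial factors.

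The \emph{lower bound} is immediate: discarding the supremum and projecting onto the first coordinate $X^{(1)}$, which is a one-dimensional standard Brownian motion, gives $\mathbf{P}_0(\sup_{0\le s\le t}|X_s|>kt)\ge \mathbf{P}_0(|X_t|>kt)\ge \mathbf{P}_0(X^{(1)}_t>kt) = \mathbf{P}(\mathcal{N}(0,1)>k\sqrt{t})=\exp[-\tfrac{k^2t}{2}(1+o(1))]$, so that $\liminf_{t\to\infty} t^{-1}\log\mathbf{P}_0(\sup_{0\le s\le t}|X_s|>kt)\ge -k^2/2$.

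For the \emph{upper bound} I would transfer the running-supremum event to the single time-$t$ marginal via the exit time $\sigma:=\inf\{s\ge 0:|X_s|\ge kt\}$, noting that $\{\sup_{0\le s\le t}|X_s|\ge kt\}=\{\sigma\le t\}$ and, by path-continuity together with $X_0=0$, that $|X_\sigma|=kt$ on $\{\sigma\le t\}$. Fix $\delta\in(0,1)$. By the strong Markov property, conditionally on $\mathcal{F}_\sigma$ and on the event $\{\sigma\le t\}$, the increment $(X_{\sigma+v}-X_\sigma)_{v\ge 0}$ is an independent Brownian motion, so the conditional probability that $\sup_{0\le v\le t-\sigma}|X_{\sigma+v}-X_\sigma|\le\delta kt$ is at least $q_t:=\mathbf{P}_0(\sup_{0\le s\le t}|X_s|\le\delta kt)$; since $\sup_{0\le s\le t}|X_s|$ has the law of $\sqrt{t}\,\sup_{0\le u\le 1}|X_u|$, we have $q_t\to 1$. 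On $\{\sigma\le t\}\cap\{|X_t-X_\sigma|\le\delta kt\}$ the triangle inequality gives $|X_t|\ge |X_\sigma|-\delta kt=(1-\delta)kt$, and integrating the conditional bound over $\{\sigma\le t\}$ yields $q_t\,\mathbf{P}_0(\sup_{0\le s\le t}|X_s|\ge kt)\le \mathbf{P}_0(|X_t|\ge(1-\delta)kt)=\exp[-\tfrac{(1-\delta)^2k^2t}{2}(1+o(1))]$. Taking logarithms, dividing by $t$, sending $t\to\infty$ and then $\delta\downarrow 0$ gives $\limsup_{t\to\infty} t^{-1}\log\mathbf{P}_0(\sup_{0\le s\le t}|X_s|>kt)\le -k^2/2$.

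Combining the two bounds proves the statement; the factor $q_t^{-1}$ and the polynomial corrections to the Gaussian and chi tails are subexponential and are absorbed into the $1+o(1)$ in the exponent. The one point that genuinely requires care, and the reason I would route the upper bound through $\sigma$ rather than through a coordinatewise decomposition, is the sharp constant: bounding $\{\sup_{0\le s\le t}|X_s|>kt\}$ by $\bigcup_{i=1}^d\{\sup_{0\le s\le t}|X^{(i)}_s|>kt/\sqrt{d}\}$ and applying the reflection principle coordinatewise only yields the exponent $-k^2/(2d)$, which is not tight for $d\ge 2$; the restart argument above avoids this loss by replacing the supremum over the entire path with the Euclidean norm $|X_t|$ at the single time $t$, whose large deviations carry the correct rate $\tfrac12$.
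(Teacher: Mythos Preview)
Your argument is correct: the lower bound via the time-$t$ marginal and the upper bound via the strong Markov property at the first exit time $\sigma$ from $B(0,kt)$, combined with the fact that the post-$\sigma$ increment stays within $\delta kt$ with probability $q_t\to 1$, cleanly reduce both directions to Gaussian/chi tails with the correct rate $k^2/2$. The minor discrepancy between $>kt$ in the statement and $\ge kt$ in your event $\{\sigma\le t\}$ is harmless at the logarithmic scale.

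As for comparison with the paper: there is nothing to compare against, since the paper does not prove Proposition~B but simply records it as a known fact and cites \cite[Lemma~5]{OCE2017}. Your self-contained restart argument is a standard and perfectly acceptable route; the comment at the end about why a coordinatewise union bound would lose the constant in dimension $d\ge 2$ is a nice observation but not strictly needed for the write-up.
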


As before, we use $\sigma_A=\inf\{s\geq 0: X_s\notin A \}$ as the first exit time of $X=(X_t)_{t\geq 0}$ out of A. For $a>0$, we introduce the notation $\mathbf{P}^a$ to stand for the law of a Brownian motion that starts at a distance $a$ from the origin. Also, for $a>0$, set $\tau_a=\sigma_{B(0,a)}$ for ease of notation. The following result compares the probabilities of confinement in balls for Brownian motions starting at the center of the ball and at any other point inside the ball. 

\begin{proposition}[Brownian confinement in balls, comparison] \label{prop1}
Let $a,b\in\mathbb{R}$ such that $0<a<b$. Then, there exists a positive constant $D=D(b/a,d)$ such that for all large $t$,
\begin{equation} \nonumber
D\,\mathbf{P}^a(\tau_b\geq t) \geq \mathbf{P}^0(\tau_b\geq t) .
\end{equation}
\end{proposition}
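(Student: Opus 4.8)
The plan is to compare the two confinement probabilities by conditioning the center-started Brownian motion on its position at a small fixed time and then applying the Markov property, exploiting the fact that the worst starting point inside $B(0,a)$ is the boundary $\{|x|=a\}$. Concretely, fix a small time $s_0>0$ (independent of $t$). By the Markov property at time $s_0$,
\begin{equation} \nonumber
\mathbf{P}^0(\tau_b\geq t) = \mathbf{E}^0\!\left[\mathbbm{1}_{\{\tau_b\geq s_0\}}\,\mathbf{P}^{X_{s_0}}(\tau_b\geq t-s_0)\right] \leq \mathbf{P}^0(\tau_b\geq s_0)\cdot \sup_{|x|\leq b}\mathbf{P}^x(\tau_b\geq t-s_0).
\end{equation}
Since $\mathbf{P}^0(\tau_b\geq s_0)\leq 1$, it suffices to bound $\sup_{|x|\le b}\mathbf{P}^x(\tau_b\ge t-s_0)$ by a constant multiple of $\mathbf{P}^a(\tau_b\ge t)$ for all large $t$. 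Radial symmetry reduces the supremum over $x$ to a supremum over the radial coordinate $|x|\in[0,b]$, and a standard coupling/monotonicity argument (pushing the starting point toward the center only increases the confinement probability — formally, compare the radial parts, or reflect across a hyperplane) shows the supremum is attained in the limit $|x|\to b$; but a starting point exactly on the boundary gives confinement probability zero for positive time, so one instead argues that for starting points at distance $\le a$ from the origin the probability is largest at $|x|=a$ after a short initial delay.

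The cleaner route, which I would actually carry out: use the eigenfunction expansion. Let $\phi$ be the principal Dirichlet eigenfunction of $-\tfrac12\Delta$ on $B(0,b)$ with eigenvalue $\mu_1=\lambda_d/b^2$, normalized so that $\phi>0$ on $B(0,b)$ and $\|\phi\|_\infty=1$; $\phi$ is radial and radially decreasing. Then for every $x\in B(0,b)$,
\begin{equation} \nonumber
\mathbf{P}^x(\tau_b\geq t) = \sum_{k\geq 1} e^{-\mu_k t}\,\phi_k(x)\,c_k, \qquad c_k=\int_{B(0,b)}\phi_k(y)\,dy,
\end{equation}
and the leading term is $e^{-\mu_1 t}\phi(x)c_1$ with $c_1>0$. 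Hence there are constants $0<c'\le c''$ and a time $t_0$ such that for all $t\ge t_0$ and all $x$ with $|x|\le a$,
\begin{equation} \nonumber
c'\,e^{-\mu_1 t}\,\phi(x) \;\leq\; \mathbf{P}^x(\tau_b\geq t) \;\leq\; c''\,e^{-\mu_1 t}\,\phi(x),
\end{equation}
where for the upper bound one uses $\phi_k(x)\le 1$ and summability of $\sum_k e^{-(\mu_k-\mu_1)t_0}|c_k|$, and for the lower bound the spectral gap $\mu_2>\mu_1$. Since $\phi$ is radial and positive on the compact set $\{|x|\le a\}\subset B(0,b)$, we have $\phi(0)\ge \phi(x)\ge \min_{|y|=a}\phi(y)=:\phi_a>0$ for all $|x|\le a$. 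Taking $x=0$ in the upper bound and any $x$ with $|x|=a$ in the lower bound gives
\begin{equation} \nonumber
\mathbf{P}^0(\tau_b\geq t) \leq c''\,e^{-\mu_1 t}\,\phi(0) \leq \frac{c''\,\phi(0)}{c'\,\phi_a}\,\mathbf{P}^a(\tau_b\geq t),
\end{equation}
which is the claim with $D:=c''\phi(0)/(c'\phi_a)$; this depends only on $d$ and on $a,b$ through the ratio $b/a$ by Brownian scaling (rescaling $B(0,b)$ to the unit ball turns $\phi_a$ into the value of the unit-ball eigenfunction at radius $a/b$).

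The main obstacle is making the eigenfunction expansion and the uniform two-sided bounds rigorous with explicit, $t$-independent constants — in particular justifying interchange of the sum with expectations and controlling the tail $\sum_{k\ge2}e^{-\mu_k t}\phi_k(x)c_k$ uniformly in $x$. This is standard spectral theory for the Dirichlet Laplacian on a ball (the heat semigroup is intrinsically ultracontractive, the spectrum is discrete with a gap, and the eigenfunctions are bounded), so I would cite a textbook reference rather than reprove it; alternatively, one avoids the full expansion by the softer Markov-property argument sketched first, at the cost of a slightly less transparent constant. Either way, radial symmetry and the strict positivity of the principal eigenfunction on the closed ball $\{|x|\le a\}$ are what drive the comparison.
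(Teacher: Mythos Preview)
Your eigenfunction-expansion argument is correct and coincides in substance with the paper's proof: the paper simply quotes explicit large-$t$ asymptotics for $\mathbf{P}^0(\tau_b\geq t)$ and $\mathbf{P}^a(\tau_b\geq t)$ from the Bessel-process literature (Hamana--Matsumoto), which show that both quantities behave like a positive constant times $e^{-j_{\nu,1}^2 t/(2b^2)}$, and the ratio of the constants yields $D$. Those formulas are precisely the leading term of your spectral expansion (on a ball the Dirichlet eigenfunctions are Bessel functions and $\mu_1=j_{\nu,1}^2/(2b^2)$), so the paper's version is your argument with the tail control outsourced to a citation rather than handled via the spectral gap. Your initial Markov-property sketch, however, does not work as written---the supremum of $\mathbf{P}^x(\tau_b\geq t-s_0)$ over $|x|\le b$ is attained at the center, not near the boundary, so the displayed inequality yields no useful comparison---but you rightly set it aside in favor of the spectral route.
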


\begin{proof}
Let $\tau_b^{(\nu)}$ be the first hitting time to $b$ of the Bessel process with index $\nu$. It is well-known that if $2\nu+2$ is a positive integer, then the Bessel process is identical in law to the radial component of a $(2\nu+2)$-dimensional Brownian motion. It follows from (2.7) and (2.8) of \cite{HM2013}, respectively, that as $t\to\infty$,
\begin{equation} \label{eqhm1}
\mathbf{P}^0(\tau_b\geq t) = \frac{1}{2^{\nu-1} \Gamma(\nu+1)} e^{-\frac{j_{\nu,1}^2 t}{2b^2}}(1+o(1))
\end{equation}
and
\begin{equation} \label{eqhm2}
\mathbf{P}^a(\tau_b\geq t) = 2\left(\frac{b}{a}\right)^\nu \frac{J_\nu(a j_{\nu,1}/b)}{j_{\nu,1}J_{\nu+1}(j_{\nu,1})} e^{-\frac{j_{\nu,1}^2 t}{2b^2}}(1+o(1))   ,
\end{equation}
where $0<a<b$, $J_\mu$ is the Bessel function of the first kind of order $\mu$, $\{j_{\mu,k}\}_{k=1}^\infty$ is the increasing sequence of positive zeros of $J_\mu$, and we suppress the dependence of $\tau$ on $\nu$ (hence on $d$) in notation. It then follows from \eqref{eqhm1} and \eqref{eqhm2} that there exist $t_0>0$ and a positive constant $D=D(b/a,d)$ such that for all $t\geq t_0$,
$$ D\,\mathbf{P}^a(\tau_b\geq t) \geq \mathbf{P}^0(\tau_b\geq t) .  $$  
\end{proof}

For $0<\delta<1$, $k\geq 0$, and $B_t=B(0,r(t))$, let 
\begin{equation}
\widetilde{p}_t=\mathbf{P}^{\delta r(t)}\left(\sigma_{B_t}\geq t-kr(t)\right)  \label{eqptilde} .
\end{equation}
Observe that by Brownian scaling, we have $\widetilde{p}_t = \mathbf{P}^{\delta}\left(\sigma_{B(0,1)}\geq (t-kr(t))/(r(t))^2\right)$. The following result is on the aytpically small growth of mass of a BBM with deactivation at the boundary of a subdiffusively expanding ball, where the BBM is started with a single particle at an interior point of the ball whose distance to the center is on the scale of the radius of the ball. For $a>0$, denote by $P^a$ the law of a branching Brownian motion that starts with a single particle at distance $a$ from the origin.  

\begin{proposition} \label{prop2}
Let $0\,{<}\,\delta\,{<}\,1$, $0\,{\leq}\,k_1\,{<}\,k_2$ and $r:\mathbb{R}_+ \to \mathbb{R}_+$ be subdiffusively increasing as in \eqref{eqradiusassumptions}. Let $\gamma_t=e^{-\kappa r(t)}$, where $\kappa>0$ is a constant, and $\widetilde{p}_t=\widetilde{p}_t(k)$ be as in \eqref{eqptilde}. For $t>0$, set $B_t=B(0,r(t))$ and $n_t=Z_t^{B_t}(B_t)$. Then, there exists a constant $c=c(\delta,\kappa,\beta)>0$ such that for all large $t$,
\begin{equation} \label{eqprop2}
\sup_{k_1\leq k\leq k_2}P^{\delta r(t)}\left(n_{t-kr(t)} < \gamma_t \widetilde{p}_t e^{\beta(t-kr(t))} \right) \leq e^{-c r(t)} .
\end{equation}
\end{proposition}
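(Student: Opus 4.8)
The plan is to bound the probability in \eqref{eqprop2} by realizing, with probability $1-e^{-cr(t)}$, a two-phase strategy for the BBM: in a first phase of length $\theta r(t)$, for a small constant $\theta=\theta(\delta,\kappa,\beta)>0$ to be fixed, the initial particle stays well inside the ball and breeds a population of size at least $e^{\beta\theta r(t)/2}$; in a second phase of length $s':=t-(k+\theta)r(t)$, each such particle launches an independent BBM which, with probability bounded away from $0$, contributes a number of confined particles comparable to its expectation. Throughout write $B:=B(0,r(t))$ (the ball $B(0,r(t-kr(t)))$ entering $n_{t-kr(t)}$ has radius $r(t)(1+o(1))$ and may be substituted for $B$ with no change), $s:=t-kr(t)$, and let $M_u$ be the number of particles alive at time $u$ whose ancestral line up to $u$ is contained in $B$; by the many-to-one lemma $E_z[M_u]=e^{\beta u}\mathbf{P}_z(\sigma_B\ge u)$, and $M_s=n_s$. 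Since $r(t)=o(\sqrt t)$ and $k$ is bounded, $s,s'\to\infty$ and $s/(r(t))^2,s'/(r(t))^2\to\infty$ uniformly in $k\in[k_1,k_2]$; this uniformity carries through, so it suffices to prove the bound for a single $k$ with $c$ and the threshold on $t$ independent of $k$.

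\emph{Phase 1.} Start from $y_0$, $|y_0|=\delta r(t)$ (so $P_{y_0}=P^{\delta r(t)}$), and let $\mathcal G$ be the event that no particle exits $B(0,\tfrac{1+\delta}{2}r(t))$ before time $\theta r(t)$ and that the population size $N^*$ at time $\theta r(t)$ satisfies $N^*\ge e^{\beta\theta r(t)/2}$; on $\mathcal G$ all $N^*$ particles alive at time $\theta r(t)$ lie in $B(0,\tfrac{1+\delta}{2}r(t))$ with ancestral lines inside $B$. A many-to-one bound combined with Proposition~\ref{propb} shows that the probability some particle exits $B(0,\tfrac{1+\delta}{2}r(t))$ by time $\theta r(t)$ is at most $\exp[\beta\theta r(t)-\tfrac{(1-\delta)^2}{8\theta}r(t)(1+o(1))]$, which is $e^{-cr(t)}$ provided $\theta<\tfrac{1-\delta}{2\sqrt{2\beta}}$; and Proposition~\ref{propa} gives $P(N^*\le e^{\beta\theta r(t)/2})=1-(1-e^{-\beta\theta r(t)})^{\lceil e^{\beta\theta r(t)/2}\rceil}\le e^{-cr(t)}$. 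Hence $P_{y_0}(\mathcal G^c)\le e^{-cr(t)}$.

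\emph{Phase 2 and conclusion.} The crux is: there is $q=q(\delta,\beta,d)>0$ such that $P_z(M_u\ge\tfrac14 E_z[M_u])\ge q$ for all $|z|\le\tfrac{1+\delta}{2}r(t)$ and all large $u$. Granting it, on $\mathcal G$ the $N^*$ subtrees rooted at time $\theta r(t)$ are independent BBMs; applying the crux at time $s'$, then Proposition~\ref{prop1} (to replace $\mathbf{P}_z(\sigma_B\ge s')$ by a constant multiple of $\mathbf{P}^0(\sigma_B\ge s')$, uniformly over $|z|\le\tfrac{1+\delta}{2}r(t)$, as the Bessel asymptotics in its proof provide), and \eqref{eqhm1}--\eqref{eqhm2} (whose quotient shows $\mathbf{P}^0(\sigma_B\ge s')\ge c_1\widetilde p_t$ for a constant $c_1>0$ and all large $t$, uniformly in $k$), each subtree contributes at least $c_2 e^{\beta s'}\widetilde p_t$ confined particles with conditional probability $\ge q$, for a constant $c_2=c_2(\delta,d)>0$. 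Since on $\mathcal G$ the quantity $M_s$ dominates the sum of these contributions over the $N^*$ subtrees, a Chernoff bound for a sum of $N^*\ge e^{\beta\theta r(t)/2}$ conditionally independent indicators with conditional mean $\ge q$ shows that, off a further event of conditional probability $\le e^{-ce^{\beta\theta r(t)/2}}$, at least $\tfrac q2 N^*$ subtrees each contribute $\ge c_2 e^{\beta s'}\widetilde p_t$, whence $M_s\ge\tfrac{qc_2}{2}e^{\beta\theta r(t)/2}e^{\beta s'}\widetilde p_t=\tfrac{qc_2}{2}e^{-\beta\theta r(t)/2}e^{\beta s}\widetilde p_t$. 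Choosing in addition $\theta<\tfrac{2\kappa}{\beta}$ makes $\tfrac{qc_2}{2}e^{-\beta\theta r(t)/2}\ge e^{-\kappa r(t)}=\gamma_t$ for all large $t$, so on the good event $M_s\ge\gamma_t\widetilde p_t e^{\beta s}$; therefore
\[
P^{\delta r(t)}\big(n_{t-kr(t)}<\gamma_t\widetilde p_t e^{\beta(t-kr(t))}\big)\le P_{y_0}(\mathcal G^c)+e^{-ce^{\beta\theta r(t)/2}}\le e^{-cr(t)},
\]
uniformly in $k\in[k_1,k_2]$ by the uniformity noted above, with $c$ depending only on $\delta,\kappa,\beta$ (and the fixed dimension $d$).

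\emph{The main obstacle: the second-moment bound.} By Paley--Zygmund the crux reduces to $E_z[M_u^2]\le C(E_z[M_u])^2$ with $C=C(\delta,\beta,d)$ uniform in large $t$, large $u$, and $|z|\le\tfrac{1+\delta}{2}r(t)$. Writing $M_u=\sum_v\mathbbm 1\{X^v_{[0,u]}\subseteq B\}$ and invoking the many-to-two formula,
\[
E_z[M_u^2]=E_z[M_u]+2\beta\int_0^u e^{\beta\tau}e^{2\beta(u-\tau)}\,\mathbf{E}_z\big[\mathbbm 1\{X_{[0,\tau]}\subseteq B\}\,h(X_\tau,u-\tau)^2\big]\,d\tau ,
\]
with $h(w,v):=\mathbf{P}_w(\sigma_B\ge v)$. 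One bounds $h(X_\tau,u-\tau)^2\le h(X_\tau,u-\tau)\sup_{w\in B}h(w,u-\tau)$, applies the Markov property to turn $\mathbf{E}_z[\mathbbm 1\{X_{[0,\tau]}\subseteq B\}h(X_\tau,u-\tau)]$ into $\mathbf{P}_z(\sigma_B\ge u)$, and controls $\sup_{w\in B}h(w,v)$ by $C'\mathbf{P}^0(\sigma_B\ge v)$ for $v\ge u_0(r(t))^2$ using intrinsic-ultracontractivity/ground-state estimates for the ball (which dilate explicitly, so $C',u_0$ are $t$-independent), the short-time range $v<u_0(r(t))^2$ being handled with $h^2\le1$ and contributing negligibly because $(r(t))^2=o(t)$. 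After the substitution $\tau\mapsto u-\tau$ the pair-term is $\lesssim\mathbf{P}_z(\sigma_B\ge u)\,\beta e^{\beta u}\int_0^u e^{\beta\sigma}\mathbf{P}^0(\sigma_B\ge\sigma)\,d\sigma$; since \eqref{eqhm1} gives $\mathbf{P}^0(\sigma_B\ge\sigma)\le C''e^{-\lambda\sigma}$ with $\lambda=\lambda_d/(r(t))^2$ and $\beta-\lambda\ge\beta/2$ for large $t$, this integral is $\lesssim e^{(\beta-\lambda)u}$ (its lower limit contributing only $e^{o(t)}$, negligible against $e^{(\beta-\lambda)u}$ as $u\gtrsim t\gg(r(t))^2$), so the pair-term is $\lesssim e^{2\beta u}e^{-\lambda u}\mathbf{P}_z(\sigma_B\ge u)\lesssim(e^{\beta u}\mathbf{P}_z(\sigma_B\ge u))^2=(E_z[M_u])^2$ once $\mathbf{P}_z(\sigma_B\ge u)\ge c\,e^{-\lambda u}$ for $|z|\le\tfrac{1+\delta}{2}r(t)$ (Proposition~\ref{prop1} and \eqref{eqhm1}), while $E_z[M_u]/(E_z[M_u])^2=(E_z[M_u])^{-1}\to0$. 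This is the delicate step: the whole scheme is driven by the strict inequality $\beta>\lambda_d/(r(t))^2$ (valid for large $t$ because $r(t)\to\infty$), which is what keeps the many-to-two integral of order $e^{\beta u}$ rather than letting it blow up, and one must carefully track that every constant is uniform in $t$, $u$, $|z|$ and $k$.
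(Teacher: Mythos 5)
Your proposal is correct in outline, but it takes a genuinely different route from the paper's. The paper conditions on the cheap event $\{N_{t-kr(t)}>g_te^{\beta(t-kr(t))}\}$ with $g_t=2\gamma_t$ (which fails with probability at most $g_t$, by Proposition A), draws a uniform random sample of $\lfloor g_te^{\beta(t-kr(t))}\rfloor$ particles from the time-$(t-kr(t))$ population, and runs a single global Chebyshev argument on the number of sampled particles with confined ancestry; the pair correlation is controlled through the law $Q^{(t)}$ of the splitting time of two sampled particles' most recent common ancestor, whose density decays like $se^{-\beta s}$ by \cite[Proposition 5]{E2008}. You instead realize an explicit two-phase survival strategy: pay $e^{-cr(t)}$ once in Phase 1 to manufacture $N^*\geq e^{\beta\theta r(t)/2}$ confined ancestors well inside the ball, then exploit the conditional independence of their subtrees together with a per-subtree Paley--Zygmund bound, so that the Phase-2 failure probability is doubly exponentially small and the entire cost comes from Phase 1. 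The two arguments are cousins: your many-to-two computation of $E_z[M_u^2]$ is essentially the same integral the paper estimates via $Q^{(t)}$, and both hinge on $\beta>\lambda_d/(r(t))^2$ keeping that integral of order $e^{(\beta-\lambda)u}$ rather than blowing up. Your parameter constraints ($\theta<(1-\delta)/(2\sqrt{2\beta})$ for Phase-1 confinement, $\theta<2\kappa/\beta$ so that $e^{-\beta\theta r(t)/2}$ beats $\gamma_t$) are mutually consistent, and the uniformity in $k\in[k_1,k_2]$ goes through as you indicate. The one place where you need strictly more than the paper's stated inputs is the uniformity over starting points: Proposition~\ref{prop1} compares confinement probabilities for a \emph{fixed} ratio $a/b$, whereas your crux and your bound $\sup_{w\in B_t}\mathbf{P}_w(\sigma_{B_t}\geq v)\leq C\,\mathbf{P}^0(\sigma_{B_t}\geq v)$ require constants uniform over all $|z|\leq\tfrac{1+\delta}{2}r(t)$ and all $v\gtrsim(r(t))^2$. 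Both facts are true (by scaling to the unit ball, the eigenfunction expansion, and the monotonicity in $a$ of the coefficient in \eqref{eqhm2}), but they would have to be stated and proved; as written this is a presentational gap rather than a mathematical one. What the paper's route buys is minimal analytic input (only \eqref{eqhm1}--\eqref{eqhm2} at two fixed radii, plus the splitting-time density); what yours buys is independence of \cite{E2008} and a transparent picture of the near-optimal strategy achieving the bound.
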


The following proof follows closely the proof of the upper bound of \cite[Theorem 2]{O2023}.

\begin{proof}
Let $k\in[k_1,k_2]$ and set $g_t=2\gamma_t$. Recall that $N_t=Z_t(\mathbb{R}^d)$, where $Z=(Z_t)_{t\geq 0}$ denotes a BBM. For $t>0$, start with the estimate
\begin{equation} \label{prop2eq1}
P^{\delta r(t)}(\:\cdot\:) \leq P^{\delta r(t)}\left(\:\cdot\: \big|\: N_{t-kr(t)} > e^{\beta(t-kr(t))} g_t \right) + P^{\delta r(t)}\left( N_{t-kr(t)} \leq e^{\beta(t-kr(t))} g_t\right) .
\end{equation}
Proposition A yields $P(N_{t-kr(t)}\leq n)=1-(1-e^{-\beta(t-kr(t))})^n\leq n e^{-\beta(t-kr(t))}$ for any $n\geq 1$. Setting $n=\left\lfloor e^{\beta(t-kr(t))} g_t\right\rfloor$, we have for $t>0$, 
\begin{equation} \label{yenidenklem}
P\left(N_{t-kr(t)} \leq e^{\beta(t-kr(t))} g_t\right) = P\left(N_{t-kr(t)} \leq \left\lfloor e^{\beta(t-kr(t))} g_t\right\rfloor\right) \leq g_t ,
\end{equation}
which bounds the second term on the right-hand side of \eqref{prop2eq1}. Since $k_2>0$ is fixed, and $r(t)=o(\sqrt{t})$ by assumption, it is clear that there exists $t_0>0$ such that $t-k_2r(t)>0$ and $\left\lfloor e^{\beta(t-k_2 r(t))} g_t\right\rfloor\geq 1$ for all $t\geq t_0$. Fix this $t_0$, and for $t\geq t_0$, define
$$  \widetilde{P}_t\left(\:\cdot\:\right) = P^{\delta r(t)}\left(\:\cdot\: \big|\: N_{t-kr(t)} > e^{\beta(t-kr(t))} g_t\right)  , $$
and let $\widetilde{E}_t$ and $\widetilde{\text{Var}}_t$ denote, respectively, the expectation and variance associated to $\widetilde{P}_t$. Let $\mathcal{N}_t$ denote the set of particles of $Z$ at time $t$, and for $u\in\mathcal{N}_t$, let $(Y_u(s))_{0\leq s\leq t}$ denote the ancestral line up to $t$ of particle $u$. Now, conditional on the event $\{N_{t-kr(t)} > e^{\beta(t-kr(t))} g_t\}$, choose randomly, independent of their genealogy and position, $M_t:=\left\lfloor e^{\beta(t-kr(t))} g_t\right\rfloor$ particles out of the particles at time $t$. Denote the collection of the chosen particles by $\mathcal{M}_t$, and define
$$ \hat{n}_t = \sum_{u\in\mathcal{M}_t} \mathbb{1}_{A_u} , $$
where $A_u = \{Y_u(s) \in B_t \:\:\forall\: 0\leq s\leq t-kr(t) \}$, and we have suppressed the dependence of $A_u$ on $t$ in notation. Since the collection $\mathcal{M}_t$ is chosen independently of the motion process, the ancestral line of each particle in $\mathcal{M}_t$ is Brownian and the many-to-one lemma implies that
\begin{equation} \nonumber
\widetilde{E}_t\left[\hat{n}_t\right] = \widetilde{p}_t \left\lfloor e^{\beta(t-kr(t))} g_t\right\rfloor ,
\end{equation} 
where $\widetilde{p}_t$ is as in \eqref{eqptilde}. Now apply Chebyshev's inequality to the random variable $\hat{n}_t$ to obtain, by \eqref{prop2eq1} and \eqref{yenidenklem}, for $t\geq t_0$,
\begin{align}
P^{\delta r(t)}\left(n_{t-kr(t)} < \gamma_t \widetilde{p}_t e^{\beta(t-kr(t))} \right) &\leq \widetilde{P}_t\left(\hat{n}_t < \gamma_t \widetilde{p}_t e^{\beta(t-kr(t))}\right) + g_t \nonumber \\
& \leq \widetilde{P}_t\left(|\hat{n}_t-\widetilde{E}_t[\hat{n}_t]|> \widetilde{p}_t\left\lfloor e^{\beta(t-kr(t))} g_t\right\rfloor-\gamma_t \widetilde{p}_t e^{\beta(t-kr(t))}\right) + g_t  \nonumber \\
& \leq \frac{\widetilde{\text{Var}}_t(\hat{n}_t)}{[(g_t-\gamma_t) \widetilde{p}_t e^{\beta(t-kr(t))}-\widetilde{p}_t]^2} + g_t . \label{prop2eq3}
\end{align}

Similar to the proof of Theorem 2 in \cite{O2023}, we may bound $\widetilde{\text{Var}}_t(\hat{n}_t)$ from above as
\begin{equation} \label{prop2eq4}
\widetilde{\text{Var}}_t\left(\hat{n}_t\right) \leq g_t e^{\beta(t-kr(t))}\left(\widetilde{p}_t-\widetilde{p}_t^2\right) + g_t^2 e^{2\beta(t-kr(t))}\left[\left(\mathcal{E}\otimes\widetilde{P}_t\right)(A_i\cap A_j)-\widetilde{p}_t^2\right],
\end{equation} 
where $(\mathcal{E}\otimes \widetilde{P}_t)(A_i\cap A_j)$ denotes averaging $\widetilde{P}_t(A_i\cap A_j)$ over the $M_t(M_t-1)$ possible pairs in the randomly chosen set $\mathcal{M}_t$. Define
\begin{equation} \label{eqivirzivir}
\widetilde{p}^{(t)}(x,s,dy):=\mathbf{P}_x(X_s\in dy \mid X_z\in B_t \:\:\forall\, 0\leq z\leq s) \quad \text{and} \quad p_{s,x}^t:=\mathbf{P}_x(\sigma_{B_t}\geq s)  . 
\end{equation}
Let $Q^{(t)}$ be the distribution of the splitting time of the most recent common ancestor of $i$th and $j$th particles under $\mathcal{E}\otimes\widetilde{P}_t$. Then, applying the Markov property at this splitting time, we obtain
\begin{equation} \label{prop2eq5}
\left(\mathcal{E}\otimes\widetilde{P}_t\right)(A_i\cap A_j) = \widetilde{p}_t \int_0^{t-kr(t)} \int_{B_t} p^t_{t-kr(t)-s,x}\, \widetilde{p}^{(t)}(\delta r(t)\mathbf{e},s,dx) \,Q^{(t)}(ds) ,
\end{equation}
where $\mathbf{e}$ denotes any unit vector in $\mathbb{R}^d$. Note that an application of the Markov property of a standard Brownian motion at time $s$ with $0<s<t$ yields
\begin{equation} \label{prop2eq6}
\widetilde{p}_t = p^t_{s,\delta r(t) \mathbf{e}} \int_{B_t} p^t_{t-kr(t)-s,y} \,\widetilde{p}^{(t)}(\delta r(t)\mathbf{e},s,dy). 
\end{equation}
Set $p^t_{s,\delta r(t)}=p^t_{s,\delta r(t)\mathbf{e}}$ for simplicity. It then follows from \eqref{prop2eq5} and \eqref{prop2eq6} that
\begin{equation} \label{prop2eq7}
\left(\mathcal{E}\otimes\widetilde{P}_t\right)(A_i\cap A_j) = \widetilde{p}_t^2 \int_0^{t-kr(t)} \frac{1}{p_{s,\delta r(t)}^t}\, Q^{(t)}(ds) .
\end{equation}
For $t>0$, define
$$ J_t:=\int_0^{t-kr(t)} \frac{1}{p_{s,\delta r(t)}^t}\, Q^{(t)}(ds) . $$
Then, by \eqref{prop2eq4} and \eqref{prop2eq7}, we have for all $t\geq t_0$, 
\begin{equation} \label{prop2eq8}
\widetilde{\text{Var}}_t(\hat{n}_t)\leq g_t \widetilde{p}_t e^{\beta (t-kr(t))}+g_t^2 \widetilde{p}_t^2 e^{2\beta (t-kr(t))}(J_t-1).
\end{equation}
Observe that $J_t-1\geq 0$. Next, we bound $J_t-1$ from above.

Choose $c>0$, fix it, and for $t$ large enough so that $t-k_2 r(t)>c r(t)$, define 
$$ J_t^{(1)} = \int_0^{c r(t)} \frac{1}{p_{s,\delta r(t)}^t}\, Q^{(t)}(ds), \quad \quad J_t^{(2)} = \int_{c r(t)}^{t-kr(t)} \frac{1}{p_{s,\delta r(t)}^t}\, Q^{(t)}(ds)   . $$
Split $J_t$ as $J_t=J_t^{(1)}+J_t^{(2)}$. In what follows, to bound $J_t$ from above for large $t$, we will use that over the first time interval $[0,cr(t)]$, the integrand $1/p_{s,\delta r(t)}^t$ is small enough; whereas the distribution of $Q^{(t)}$ puts a small enough `weight' on the second time interval $[cr(t),t-kr(t)]$. 

Since $p_{s,\delta r(t)}^t$ is nonincreasing in $s$, $J_t^{(1)} \leq \left[p_{c r(t) ,\delta r(t)}^t\right]^{-1}$. Moreover, by Proposition B, 
$$ 1-p_{c r(t) ,\delta r(t)}^t = \exp\left[-\frac{(1-\delta)^2 r(t)}{2c}(1+o(1))\right]  , $$
from which it follows that
\begin{equation} \label{prop2eq9}
J_t^{(1)} -1 \leq \exp\left[-\frac{(1-\delta)^2 r(t)}{2c}(1+o(1))\right] .
\end{equation}

Next, we bound $J_t^{(2)}$ from above. It is known from \cite[Proposition 5]{E2008} that $Q^{(t)}$ is absolutely continuous with respect to the Lebesgue measure, which we denote by $ds$, and its density function, which we denote by $g^{(t)}$, satisfies 
\begin{equation} \nonumber
\exists\,C>0,\: s_0>0 \quad \text{such that} \quad \forall\:s\geq s_0,\:\: g^{(t)}(s)\leq C s e^{-\beta s}.
\end{equation}
Recall that $\lambda_d$ denotes the principal Dirichlet eigenvalue of $-\frac{1}{2}\Delta$ on the unit ball in $d$ dimensions. It follows from \cite[(2.8)]{HM2013} (since $2\lambda_d = j_{\nu,1}^2$ therein) and Brownian scaling that there exists a positive constant $D=D(\delta,d)$ such that for all $t>0$,
\begin{equation} \nonumber
p^t_{s,\delta r(t)} \geq D e^{-\frac{\lambda_d s}{(r(t))^2}} .
\end{equation}
We may then continue with
\begin{equation} \label{prop2eq11}
J_t^{(2)} = \int_{c r(t)}^{t-kr(t)} \frac{1}{p_{s,\delta r(t)}^t}\, Q^{(t)}(ds) \leq \frac{C}{D} \int_{c r(t)} ^{\infty} s \exp\left[-\left(\beta-\frac{\lambda_d}{(r(t))^2}\right)s\right]\,ds \leq e^{-\beta c r(t)(1+o(1))},
\end{equation}
where the last step follows by an application of integration by parts along with the assumptions that $r(t)\to\infty$ as $t\to\infty$ and $r(t)=o(\sqrt{t})$. It follows from \eqref{prop2eq9} and \eqref{prop2eq11} that there exists a positive constant $c(\delta,\beta)$ such that for all large $t$, 
\begin{equation} \label{prop2eq12}
J_t-1 = J_t^{(1)} -1 + J_t^{(2)} \leq e^{-c(\delta,\beta) r(t)} . 
\end{equation}
The bound in \eqref{prop2eq12} has no dependence on $k$. Moreover, the bounds in \eqref{prop2eq3} and \eqref{prop2eq8} hold for each $k\in [k_1,k_2]$. Then, from \eqref{prop2eq3}, \eqref{prop2eq8} and \eqref{prop2eq12}, we have
\begin{equation} \nonumber
P^{\delta r(t)}\left(n_{t-kr(t)} < \gamma_t \widetilde{p}_t e^{\beta(t-kr(t))} \right) \leq \frac{g_t \widetilde{p}_t e^{\beta (t-kr(t))}+g_t^2 \widetilde{p}_t^2 e^{2\beta (t-kr(t))}e^{-c(\delta,\beta) r(t)}}{[(g_t-\gamma_t) \widetilde{p}_t e^{\beta(t-kr(t))}-\widetilde{p}_t]^2} + g_t.
\end{equation}
To complete the proof, note that $\widetilde{p}_t\to 0$ as $t\to\infty$, and recall the choice $g_t=2\gamma_t$ and that $\gamma_t=e^{-\kappa r(t)}$ for some $\kappa>0$. This yields, there exists $t_1>0$ such that for all $t\geq t_1$ and $k\in[k_1,k_2]$,
$$  P^{\delta r(t)}\left(n_{t-kr(t)} < \gamma_t \widetilde{p}_t e^{\beta(t-kr(t))} \right) \leq \frac{4}{\gamma_t \widetilde{p}_t} e^{-\beta(t-kr(t))} + 8 e^{-c(\delta,\beta) r(t)} + g_t \leq 11 e^{-(c(\delta,\beta)\wedge \kappa)r(t)} . $$

\end{proof}

\section{Proof of Theorem~\ref{theorem1}}\label{section4}

\subsection{Proof of the lower bound}

The proof of the lower bound of Theorem~\ref{theorem1} is based on finding an optimal strategy to realize the event $\{n_t<\gamma_t p_t e^{\beta t}\}$ for each of the low $\kappa$ regime $0<\kappa\leq \sqrt{2\beta}$ and the high $\kappa$ regime $\kappa>\sqrt{2\beta}$, and it was given in \cite{O2023}. Here, we review the proof for completeness.

Note that $\{n_t=0\}\subseteq\{n_t<\gamma_t p_t e^{\beta t}\}$, and one way to realize the event $\{n_t=0\}$ is to completely suppress the branching of the initial particle and move it outside $B_t=B(0,r(t))$ over the time interval $[0, kr(t)]$ for some $k>0$. By Proposition B, the probability of this joint strategy is
\begin{equation} \label{eq1lowerbound}
\exp\left[-\beta k r(t)-\frac{r(t)}{2 k}(1+o(1))\right].
\end{equation}
Optimizing the exponent in \eqref{eq1lowerbound} over $k>0$ gives $k=1/\sqrt{2\beta}$, and with this choice of $k$, we obtain
\begin{equation} \label{eq2lowerbound}
P(n_t<\gamma_t p_t e^{\beta t})\geq P(n_t=0)\geq \exp\left[-\sqrt{2\beta}r(t)(1+o(1))\right] .
\end{equation}

Now let $f:\mathbb{R}_+\to\mathbb{R}_+$ be such that $f(t)=o(t)$, and denote by $\tau_1$ and $(Y_1(s))_{0\leq s\leq \tau_1}$, respectively, the lifetime and the path of the initial particle. For $t>0$, define the events
$$ A_t=\{N_{f(t)}=1\}, \quad E_t=\{n_t<\gamma_t p_t e^{\beta t} \}, \quad D_t=\{ Y_1(z)\in B_t\:\:\forall\, 0\leq z \leq f(t) \} .$$
Estimate
\begin{equation} \label{eq3lowerbound}
P(E_t)\geq P(E_t \cap A_t)=P(E_t \mid A_t) P(A_t).
\end{equation}
Conditional on $A_t$, it is clear that $\tau_1\geq f(t)$, and $n_t=0$ if $Y_1(z)\notin B_t$ for some $z\in[0,f(t)]$. Hence,
\begin{equation} \label{eq4lowerbound}
E\left[n_t \mid A_t\right]= E\left[n_t \mathbb{1}_{D_t} \mid A_t\right] = E\left[n_t \mid A_t, D_t\right] P(D_t \mid A_t).
\end{equation}
Using the notation from \eqref{eqivirzivir}, we have
\begin{align} 
E\left[n_t \mid A_t, D_t\right] &= \int_{B_t} E\left[n_t \mid A_t, D_t, Y_1(f(t))=y\right]P(Y_1(f(t)) \in dy \mid A_t,D_t) \nonumber \\
&= \int_{B_t} E\left[n_t \mid A_t, D_t, Y_1(f(t))=y\right] \widetilde{p}^{(t)}(0,f(t),dy) . \label{eq5lowerbound}
\end{align}
and 
\begin{equation} \label{eq6lowerbound}
P(D_t \mid A_t) = p_{f(t),0}^t .
\end{equation}
Moreover, applying the Markov property of the BBM at time $f(t)$, and using the many-to-one lemma (see for instance \cite[Lemma 1.6]{E2014}), we obtain
\begin{equation} \label{eq7lowerbound}
E\left[n_t \mid A_t, D_t, Y_1(f(t))=y\right] = p^t_{t-f(t),y}\, e^{\beta(t-f(t))}  ,\quad y\in B_t . 
\end{equation}
It then follows from \eqref{eq4lowerbound}-\eqref{eq7lowerbound} that
\begin{equation} \nonumber
E\left[n_t \mid A_t\right] = e^{\beta(t-f(t))} p_{f(t),0}^t \int_{B_t} p^t_{t-f(t),y}\, \widetilde{p}^{(t)}(0,f(t),dy) =  e^{\beta(t-f(t))} p_t ,
\end{equation}
where the last equality follows by applying the Markov property of Brownian motion at time $f(t)$, similar in spirit to \eqref{prop2eq6}. Then, by the Markov inequality,   
\begin{equation} \label{eq9lowerbound}
P(E_t^c \mid A_t)\leq \frac{E\left[n_t \big\vert A_t\right]}{\gamma_t p_t e^{\beta t}}=\gamma_t^{-1}e^{-\beta f(t)}.
\end{equation}
Choose $f(t)=-(1/\beta)\log ((1-\delta)\gamma_t)$ with $0<\delta<1$ in \eqref{eq9lowerbound}, which leads to $P(E_t \mid A_t)\geq \delta$. Noting that $P(A_t)=e^{-\beta f(t)}$, the estimate in \eqref{eq3lowerbound} then yields
\begin{equation} \nonumber
P(E_t)\geq \delta e^{-\beta f(t)} = \delta(1-\delta)\gamma_t = e^{-\kappa r(t)(1+o(1))}.
\end{equation}
In view of \eqref{eq2lowerbound}, this completes the proof of the lower bound of Theorem~\ref{theorem1}.

\subsection{Proof of the upper bound}

We will follow a discretization method similar to the proof of the upper bound of Theorem 2.1 in \cite{O2020}. Proposition~\ref{prop1} and Proposition~\ref{prop2} will be the key ingredients in the proof.

Recall that $N_t=Z_t(\mathbb{R}^d)$ is the total mass at time $t$, and define the random variable
$$ \rho_t = \sup\{\rho\in [0,1]:N_{\rho(\kappa/\beta)r(t)}\leq \left\lfloor r(t)\right\rfloor\}  .$$
Observe that for $x\in [0,1]$, we have $\{\rho_t\geq x\}\subseteq \{N_{x(\kappa/\beta)r(t)}\leq \left\lfloor r(t)\right\rfloor+1\}$. Recall that $\gamma_t=e^{-\kappa r(t)}$ and $p_t=\mathbf{P}_0(\sigma_{B_t}\geq t)$. For $t>0$, define the event
$$ A_t=\{n_t<\gamma_t p_t e^{\beta t}  \} . $$
We condition on $\rho_t$ as follows. For every $n=2,3,\ldots$ 
\begin{align}
P(A_t)&=\sum_{i=0}^{n-2} P\left(A_t\cap\left\{\frac{i}{n}\leq \rho_t<\frac{i+1}{n}\right\}\right) + P\left(A_t\cap\left\{\rho_t\geq 1-\frac{1}{n}\right\}\right) \nonumber \\
&\leq \sum_{i=0}^{n-2} \exp\left[-\frac{i}{n}\kappa r(t)+o(r(t))\right] P_t^{(i,n)}(A_t) +\exp\left[-\kappa r(t)\left(1-\frac{1}{n}\right)+o(r(t))\right] \label{eqt1} ,
\end{align}
where we have used Proposition A to bound $P(\frac{i}{n}\leq \rho_t<\frac{i+1}{n})$ and $P(\rho_t\geq 1-1/n)$ from above, and introduced the conditional probabilities
$$  P_t^{(i,n)}(\:\cdot\:) = P\left(\:\cdot\: \bigg| \: \frac{i}{n}\leq \rho_t<\frac{i+1}{n} \right)   , \quad i=0,1,\ldots,n-2. $$
Next, we bound $P_t^{(i,n)}(A_t)$ from above. To that end, let $E_t$ be the event that there is at least one particle outside the ball $B(0,\delta r(t))$ at some instant $s$ with $0\leq s\leq \rho_t(\kappa/\beta)r(t)$, and continue with the estimate
\begin{equation}
P_t^{(i,n)}(A_t) \leq P_t^{(i,n)}\left(E_t\right) + P_t^{(i,n)}\left(A_t \mid E_t^c\right) \label{eqt2} .
\end{equation}
Under the law $P_t^{(i,n)}$, we have $\rho_t<(i+1)/n$, and by definition of $\rho_t$ there are exactly $\left\lfloor r(t)\right\rfloor+1$ particles present at time $\rho_t(\kappa/\beta)r(t)$. Therefore, the first term on the right-hand side of \eqref{eqt2} can be estimated via Proposition B and the union bound as
\begin{align} 
&P_t^{(i,n)}\left(E_t\right) \leq (\left\lfloor r(t)\right\rfloor+1) \mathbf{P}_0 \bigg(\sup_{0\leq s\leq \frac{i+1}{n}\frac{\kappa}{\beta}r(t)} |X_s|>\delta r(t)\bigg) \nonumber \\
&=\exp\left[-\frac{1}{2}\left(\frac{\delta\beta n}{(i+1)\kappa}\right)^2\frac{i+1}{n}\frac{\kappa}{\beta}r(t)(1+o(1))\right] =\exp\left[-\frac{\delta^2 n \beta r(t)}{2(i+1)\kappa}(1+o(1))\right]  . \label{eqt3}
\end{align}
We now bound the term $P_t^{(i,n)}\left(A_t \mid E_t^c\right)$ in \eqref{eqt2} from above. Observe that conditional on $\{\frac{i}{n}\leq \rho_t<\frac{i+1}{n}\}\cap E_t^c$, there exists an instant, namely $\rho_t(\kappa/\beta)r(t)$, inside the time interval $[i/n(\kappa/\beta)r(t),(i+1)/n(\kappa/\beta)r(t)]$, at which there are exactly $\left\lfloor r(t)\right\rfloor+1$ particles, all of which are inside $B(0,\delta r(t))$ and have ancestral lines confined to $B(0,\delta r(t))$ over $[0,\rho_t(\kappa/\beta)r(t)]$. For an upper bound on $P_t^{(i,n)}\left(A_t \mid E_t^c\right)$, we suppose the `worst case', that is, suppose that each of these particles is on the boundary of $B(0,\delta r(t))$ at time $\rho_t(\kappa/\beta)r(t)$. Then, an application of the strong Markov property of the BBM at time $\rho_t(\kappa/\beta)r(t)$ along with the independence of the sub-BBMs initiated at that time yields
\begin{equation} \label{eqt4}
P_t^{(i,n)}\left(A_t \mid E_t^c\right) \leq \left(\sup_{\frac{i}{n}\leq x\leq \frac{i+1}{n}}\left[P^{\delta r(t)}\left(n_{t-x(\kappa/\beta)r(t)}<\gamma_t p_t e^{\beta t}\right)\right]\right)^{\left\lfloor r(t)\right\rfloor} .
\end{equation}


Next, we seek to find a suitable upper bound for $P^{\delta r(t)}\left(n_{t-x(\kappa/\beta)r(t)}<\gamma_t p_t e^{\beta t}\right)$ uniformly over $x\in[i/n,(i+1)/n]$. Let $0\leq k_1<k_2$. Provided $\kappa-k_2\beta>0$, there exists $t_0>0$ such that for all $t\geq t_0$ and $k\in[k_1,k_2]$, the following bound on $\gamma_t p_t e^{\beta t}$ holds:
\begin{align} 
\gamma_t p_t e^{\beta t} &= e^{-\kappa r(t)}\mathbf{P}_0\left(\sigma_{B_t}\geq t\right) e^{\beta t} \nonumber \\
&= e^{-(\kappa-k\beta)r(t)}\mathbf{P}_0\left(\sigma_{B_t}\geq t\right) e^{\beta (t-k r(t))} \nonumber \\
&\leq D\,\xbar{\gamma}_t\,\mathbf{P}^{\delta r(t)}\left(\sigma_{B_t}\geq t\right) e^{\beta (t-k r(t))} \nonumber \\
&\leq D\,\xbar{\gamma}_t\, \mathbf{P}^{\delta r(t)}\left(\sigma_{B_t}\geq t-k r(t)\right) e^{\beta (t-k r(t))} \nonumber \\ 
&\leq e^{-(\kappa-k_2\beta)r(t)/2} \widetilde{p}_t e^{\beta (t-k r(t))} , \label{eqt41}
\end{align}
where $D=D(\delta,d)>0$, we have defined $\xbar{\gamma}_t:=e^{-(\kappa-k\beta)r(t)}$ and used Proposition~\ref{prop1} together with Brownian scaling in the first inequality, and used the monotonicity in $s$ of $\mathbf{P}^{\,\cdot}\left(\sigma_{B_t}\geq s\right)$ in the second inequality. Now set $k_1=(i/n)(\kappa/\beta)$ and $k_2=((i+1)/n)(\kappa/\beta)$. Observe that $\kappa-k_2\beta=\kappa(1-(i+1)/n)>0$ since $i\leq n-2$. With these choices of $k_1$ and $k_2$, it then follows from \eqref{eqt41} and Proposition~\ref{prop2} upon replacing $\gamma_t$ by $\widetilde{\gamma}_t:=e^{-(\kappa-k_2\beta)r(t)/2}$ in \eqref{eqprop2} and setting $x=k\beta/\kappa$ that there exist $c=c(\delta,\kappa,\beta)>0$ and $t_1>0$ such that for all $x\in[i/n,(i+1)/n]$ and $t\geq t_1$,
\begin{equation} \label{eqt5}
P^{\delta r(t)}\left(n_{t-x(\kappa/\beta)r(t)}<\gamma_t p_t e^{\beta t}\right) \leq P^{\delta r(t)}\left(n_{t-x(\kappa/\beta)r(t)}<\widetilde{\gamma}_t \widetilde{p}_t e^{\beta(t-x(\kappa/\beta) r(t))}\right) \leq e^{-c r(t)} .
\end{equation}


Combining \eqref{eqt2}-\eqref{eqt4} and \eqref{eqt5}, for all large $t$,
\begin{equation} \label{eqt6}
P_t^{(i,n)}(A_t) \leq \exp\left[-\frac{\delta^2 n \beta r(t)}{2(i+1)\kappa}(1+o(1))\right] + \left(e^{-c r(t)}\right)^{\left\lfloor r(t)\right\rfloor}.
\end{equation} 
Observe that the first term on the right-hand side of \eqref{eqt6} is the dominating term for large $t$. Substituting \eqref{eqt6} into \eqref{eqt1}, and optimizing over $i\in\{0,1,\ldots,n-2\}$ yields
\begin{equation}  \label{eqt7}
\limsup_{t\to\infty}\frac{1}{r(t)}\log P(A_t)\leq -\beta\left[\min_{i\in\{0,1,\ldots,n-2\}}\left\{\frac{i}{n}\frac{\kappa}{\beta}+\frac{\delta^2}{2\kappa(i+1)/n}\right\}\wedge \frac{\kappa}{\beta}\left(1-\frac{1}{n}\right)\right] .
\end{equation}
First, let $\delta\to 1$ on the right-hand side of \eqref{eqt7}; then set $\rho=i/n$ to obtain
\begin{equation}  \nonumber
\limsup_{t\to\infty}\frac{1}{r(t)}\log P(A_t)\leq -\beta\left[\min_{\rho\in\{0,1/n,\ldots,(n-2)/n\}}\left\{\frac{\rho\kappa}{\beta}+\frac{1}{2\kappa\rho+2\kappa/n}\right\}\wedge \frac{\kappa}{\beta}\left(1-\frac{1}{n}\right)\right] .
\end{equation}
Now let $n\to\infty$ and use the continuity of the functional form from which the minimum is taken to obtain
\begin{equation}  \label{eqt9}
\limsup_{t\to\infty}\frac{1}{r(t)}\log P(A_t)\leq -\beta\left[\inf_{\rho\in(0,1]}\left\{\frac{\rho\kappa}{\beta}+\frac{1}{2\kappa\rho}\right\}\wedge \frac{\kappa}{\beta}\right].
\end{equation}
For $\rho\in(0,1]$, define the function $f$ by
$$ f(\rho)= \frac{\rho\kappa}{\beta}+\frac{1}{2\kappa\rho} . $$
One can check that when $\kappa>\sqrt{2\beta}$, $f$ is minimized at $\bar{\rho}=\sqrt{\beta/(2\kappa^2)}$ over $(0,1]$ where $0<\bar{\rho}<1/2$, and the minimum value of $f$ is $\sqrt{2/\beta}$. On the other hand, when $\kappa\leq\sqrt{2\beta}$, the minimum value of $f$ over $(0,\infty)$ is $\sqrt{2/\beta}$, which means, due to $\kappa/\beta \leq \sqrt{2/\beta}$, the second term under the minimum in \eqref{eqt9} is the output of this minimum. Collecting all this regarding $f$, and using \eqref{eqt9}, we arrive at
$$  \limsup_{t\to\infty}\frac{1}{r(t)}\log P(A_t)\leq -(\kappa \wedge \sqrt{2\beta})  . $$
This completes the proof of the upper bound of Theorem~\ref{theorem1}.

\section{Proof of Theorem~\ref{theorem2}}\label{section5}

\subsection{Proof of the upper bound}

We use the Markov inequality together with the known formula for $E(n_t)$. Recall from \eqref{eqexpectedmass} that $E(n_t)=p_t e^{\beta t}$, where $p_t$ stands for the probability of confinement of a Brownian motion to $B(0,r(t))$ over $[0,t]$. It follows from a well-known result on Brownian confinement in balls along with Brownian scaling that $p_t=\mathbf{P}_0\left(\sigma_{B_t}\geq t\right)=\exp\left[-\lambda_d t(1+o(1))/(r(t))^2\right]$ (see for example \cite[Proposition B]{O2023}). Then, by the Markov inequality, for any $\varepsilon>0$,
\begin{align}
P\left(n_t>\exp\left[\beta t-\frac{(\lambda_d-\varepsilon)t}{(r(t))^2}\right]\right) &\leq \frac{E(n_t)}{\exp\left[\beta t-\frac{(\lambda_d-\varepsilon)t}{(r(t))^2}\right]} = \frac{\exp\left[-\frac{\lambda_d t}{(r(t))^2}(1+o(1))+\beta t\right]}{\exp\left[\beta t-\frac{(\lambda_d-\varepsilon)t}{(r(t))^2}\right]} \nonumber \\
& = \exp\left[-\frac{t}{(r(t))^2}(\varepsilon+o(1))\right] \to 0, \quad t\to\infty . \nonumber
\end{align}
This proves the upper bound of Theorem~\ref{theorem2}.

\subsection{Proof of the lower bound}

To prove the lower bound of Theorem~\ref{theorem2}, we will show that for any $\varepsilon>0$, as $t\to\infty$, $P\left(n_t<\exp\left[\beta t-(\lambda_d+\varepsilon)t/(r(t))^2\right]\right)\to 0$. We consider two cases: $r(t)\lesssim t^{1/3}$ and $r(t)\gtrsim t^{1/3}$, where we use the notation $f(t)\gtrsim g(t)$ to mean there exists a positive constant $c>0$ such that $f(t)/g(t)\geq c$ for all $t\geq t_0$ for some $t_0>0$.

\noindent \textbf{\underline{Case 1}:} Suppose that $\lim_{t\to\infty}r(t)=\infty$ and $r(t)\lesssim t^{1/3}$. Then, $e^{-\varepsilon t/(r(t))^2}\leq e^{-\kappa r(t)}$ for some $\kappa=\kappa(\varepsilon)>0$ for all large $t$, and it follows that for all large $t$,
\begin{equation} \nonumber
P\left(n_t<\exp\left[\beta t-\frac{(\lambda_d+\varepsilon)t}{(r(t))^2}\right]\right) \leq P\left(n_t<\exp\left[\beta t-\frac{\lambda_d t}{(r(t))^2}-\kappa r(t)\right]\right) \to 0,\quad t\to\infty, 
\end{equation}
where we have used Theorem~\ref{theorem1} and that $p_t=\exp\left[-\frac{\lambda_d t}{(r(t))^2}(1+o(1))\right]$.

\noindent \textbf{\underline{Case 2}:} Suppose that $r(t)=o(\sqrt{t})$ and $r(t)\gtrsim t^{1/3}$. To handle this case, we revisit the proof of the upper bound of Theorem 2 in \cite{O2023}. Observe that the aforementioned proof remains the same up to and including equation (92) therein provided that $g_t>\gamma_t$ for all $t>0$ and that $\lim_{t\to\infty}g_t = 0$. Now, choose for instance $g_t=2 \gamma_t$ and $\gamma_t = e^{-\frac{\varepsilon t}{2(r(t))^2}}$. Then, by \cite[(92)]{O2023}, for all large $t$, 
\begin{align}
P\left(n_t<\exp\left[\beta t-\frac{(\lambda_d+\varepsilon)t}{(r(t))^2}\right]\right) &\leq P\left(n_t<\gamma_t p_t e^{\beta t}\right) \nonumber \\
&\leq \frac{4}{\gamma_t p_t}e^{-\beta t} + 8e^{-\sqrt{\beta/2}r(t)} + g_t \leq 3 g_t. \nonumber
\end{align}
Since $g_t\to 0$ as $t\to\infty$, this completes the proof of the lower bound of Theorem~\ref{theorem2}.


\bibliographystyle{plain}

\begin{thebibliography}{1}

\bibitem{E2014}
J. Engl\"{a}nder.
\newblock {\em Spatial Branching in Random Environments and with Interaction}. World Scientific, Singapore, 2014.

\bibitem{E2008}
J. Engl\"{a}nder.
\newblock Quenched law of large numbers for branching Brownian motion in a random medium.
\newblock{\em Ann.\ Inst.\ H.\ Poincar\'e Probab.\ Statis.} \textbf{44} (3) (2008) 490 -- 518.

\bibitem{HM2013}
Y. Hamana and H. Matsumoto.
\newblock The probability distributions of the first hitting times of Bessel processes.
\newblock {\em Trans.\ Amer.\ Math Soc.} \textbf{365} (10) (2013) 5237 -- 5257.

\bibitem{H2016}
S. C. Harris, M. Hesse and A. E. Kyprianou.
\newblock Branching brownian motion in a strip: survival near criticality.
\newblock {\em Ann.\ Probab.} \textbf{44} (1) (2016) 235 -- 275.

\bibitem{KT1975}
S. Karlin and M. Taylor.
\newblock {\em A First Course in Stochastic Processes}. Academic Press, New York, 1975.

\bibitem{K1978}
H. Kesten.
\newblock Branching Brownian motion with absorption.
\newblock {\em Stoch.\ Proc.\ Appl.} \textbf{7} (1) (1978) 9 -- 47.

\bibitem{OCE2017}
M. \"{O}z, M. \c{C}a\u{g}lar and J. Engl\"{a}nder.
\newblock Conditional speed of branching Brownian motion, skeleton decomposition and application to random obstacles. 
\newblock{\em Ann.\ Inst.\ H.\ Poincar\'e Probab.\ Statis.} \textbf{53} (2) (2017) 842 -- 864.

\bibitem{O2020}
M. \"Oz.
\newblock Large deviations for local mass of branching Brownian motion.
\newblock {\em ALEA Lat.\ Am.\ J.\ Probab.\ Math.\ Stat.} \textbf{17} (2020) 711 -- 731.

\bibitem{O2023}
M. \"Oz.
\newblock {\em Branching Brownian motion in an expanding ball and application to the mild obstacle problem}. arXiv:2106.00575 


\bibitem{S1958}
B. A. Sevast’yanov.
\newblock Branching stochastic processes for particles diffusing in a bounded domain with absorbing boundaries.
\newblock {\em Theor.\ Probab.\ Appl.} \textbf{3} (2) (1958) 121 -- 136.
 

\end{thebibliography}

\end{document}